\newtheorem{theorem}{Theorem}[section]
\newtheorem{lemma}[theorem]{Lemma}
\newtheorem{note}[theorem]{Note}
\newtheorem{cor}[theorem]{Corollary}
\newtheorem*{Theorem1'}{Theorem 1'}
\theoremstyle{definition}
\newtheorem{example}[theorem]{Example}
\theoremstyle{remark}
\numberwithin{equation}{section}
\newcommand \BB{{\mathcal C}}
\newcommand \C{{\mathbb C}}
\renewcommand \O{{\mathcal O}}
\renewcommand \ker{{\mathrm {ker}}}
\newcommand \GL{{\mathrm {GL}}}
\newcommand \Sp{{\mathrm {Sp}}}
\newcommand \U{{\mathrm {U}}}
\newcommand \SL{{\mathrm {SL}}}
\newcommand \SSL{{\mathrm {SSL}}}
\newcommand \m{{\mathfrak {m}}}
\renewcommand \r{{\mathfrak {r}}}
\newcommand \va{{\varepsilon}}
\newcommand \ind{{\mathrm {ind}}}
\newcommand \F{{\mathbb F}}
\newcommand \rk{{\mathrm {rank}}}
\newcommand{\ssl}{\SSL^+_*(2,A)}
\newcommand{\sll}{\SL^+_*(2,A)}
\begin{document}

\title[Weil representations]{Weil representations via abstract data and
Heisenberg groups: a comparison}

\author{J. Cruickshank}
\address{School of Mathematics, Statistics and Applied Mathematics, National University of Ireland, Galway, Ireland}
\email{james.cruickshank@nuigalway.ie}

\author{L. Guti\'errez Frez}
\address{Instituto de Ciencias Fisicas y Matem\'aticas, Universidad Austral de Chile, Chile}
\email{luis.gutierrez@uach.cl}

\author{F. Szechtman}
\address{Department of Mathematics and Statistics, Univeristy of Regina, Canada}
\email{fernando.szechtman@gmail.com}

\subjclass[2010]{20C15, 20H25, 20F05, 11T24, 15B33}



\keywords{Weil representation, unitary group, Gauss sum, Bruhat decomposition}

\begin{abstract} Let $B$ be a ring, not necessarily commutative, having an involution $*$ and let $\U_{2m}(B)$
be the unitary group of rank $2m$ associated to a hermitian or skew hermitian form relative to $*$. When $B$ is finite, we construct a Weil representation of
$\U_{2m}(B)$ via Heisenberg groups and find its explicit matrix form on the Bruhat elements. As a consequence, we derive information
on generalized Gauss sums. On the other hand, there is an axiomatic
method to define a Weil representation of $\U_{2m}(B)$, and we compare the two Weil representations thus obtained under fairly general
hypotheses. When $B$ is local, not necessarily finite, we compute the index of the subgroup of $\U_{2m}(B)$ generated by its Bruhat elements.
Besides the independent interest, this subgroup and index are involved in the foregoing comparison of Weil representations.
\end{abstract}

\maketitle

\section{Introduction}

Weil representations were introduced by A. Weil \cite{W} for symplectic groups over local fields. Following Weil's ideas, analogues over finite fields
were constructed in \cite{H} and \cite{Ge}, although these representations had already been considered, independently of Weil's work, in \cite{BRW} and \cite{Wa}.

Over the years, Weil representations of symplectic and unitary groups have attract significant attention, see for instance \cite{AP, CMS, G, GPS, S, T, TZ, GMT}.

In this paper, we construct, in explicit matrix form, Weil representations of unitary groups $\U(2m,B)$, $B$ a finite ring,
by means of two methods which are subsequently compared; as a consequence we derive identities for generalized Gauss sums, analogous to those encountered in the classical case when $B$ is a finite field.
Our comparison involves the subgroup of $\U(2m,B)$ generated by its Bruhat elements and, for this reason, we determine the exact relationship between this subgroup and 
$\U(2m,B)$ when $B$ is local, not necessarily finite.

The connection between the Weil representation of the symplectic group $\Sp(2m,F_q)$ and Gauss sums over $F_q$, $q$ odd,
is well-known, as Gauss sums are required to correct an initially projective Weil representation into an ordinary one
and, once this is achieved, various Weil character values turn out to be Gauss sums. Let us thus
start by describing the generalized Gauss sums arising from the Weil representation of $\U(2m,B)$ and the information on the former that can be gleaned from the latter.

Given an odd prime prime $p$ and an integer $t$ not divisible by $p$, we have the Gauss sum
$$
G_t=\underset{b\in F_p}\sum \beta(t b^2),
$$
where $\beta:F_p^+\to\C^\times$ is the group homomorphism given by $\beta(1)=e^{2\pi i /p}$. Gauss proved~that
\begin{equation}
\label{gaus1}
 G_1=\begin{cases} \sqrt{p} & \text{ if }p\equiv 1\mod 4,\\ i\sqrt{p} &
 \text{ if }p\equiv 3\mod 4.\end{cases}
\end{equation}
Landau's book \cite[pp. 197-218]{La} contains four different proofs of this result.

It is well-known \cite[ch. 4]{Ri} that $G_t$ and $G_1$ are connected by
\begin{equation}
\label{gaus2}
G_t=\left(\frac{t}{p}\right)G_1=(-1)^{\nu(t)}G_1,
\end{equation}
where $\left(\frac{t}{p}\right)$ is the Legendre symbol and $\nu(t)$ is the number of integers $1\leq j\leq (p-1)/2$ such that $tj\equiv -k$ for some $1\leq k\leq (p-1)/2$. From (\ref{gaus1}) and (\ref{gaus2})  we obtain
\begin{equation}
\label{gaus3}
G_t^2=(-1)^{(p-1)/2} p,
\end{equation}
which can also be derived directly (see  \cite[ch. 4]{Ri}).

Let $B$ be a finite ring, not necessarily commutative, such that $2\in B^\times$, the unit group of~$B$. We also assume that $B$ has an involution $*$, that is,
an antiautomorphism of order 1 or 2. We suppose, as well,
that $B$ admits a linear character $\beta:B^+\to\C^\times$ whose kernel contains no right ideals except~(0). For a finite ring $R$ this condition has
been extensively studied, it is left-right symmetric and equivalent to $R$ being a Frobenius ring; see \cite{Ho} and references therein.

For a correct generalization of Gauss sum, we require that $\beta(b+\va b^*)=1$ for all $b\in B$, where $\va$ is a fixed element taken from $\{1,-1\}$.

Extend $*$ to an involution, also denoted by $*$, of the full matrix ring $M(m,B)$ by declaring $(Y^*)_{ij}=(Y_{ji})^*$, for $Y\in M(m,B)$. Suppose $T\in\GL(m,B)$ satisfies $T+\va T^*=0$. If $\va=1$ we further assume that $m=2n$ is even in order to ensure the existence of such~$T$.

Associated to this set-up we have the generalized Gauss sum
\begin{equation}
\label{gaus5}
G_T=\underset{b\in B^m}\sum \beta(b^* T b),
\end{equation}
where $B^m$ is viewed here as the space of column vectors. As in (\ref{gaus2}), we wish to connect (\ref{gaus5}) to a fixed standard sum $G_S$, where
$$
S=\begin{cases} 1_m  & \text{ if }\va=-1,\\
\begin{pmatrix} 0 & 1_n \\  -1_n & 0 \end{pmatrix}& \text{ if }\va=1.
\end{cases}
$$
In order to realize the desired similitude with the classical case we need an analogue of~$\nu(t)$. For this purpose, let $I$ be any subset of $B^m$
obtained by selecting exactly one element from each of the sets $\{v,-v\}$, $v\in B^m$, $v\neq 0$. For $Y\in\GL(m,B)$, let
$I_Y=\{v\in I\,|\, Yv\in -I\}$. Then the function $\mu:\GL(m,B)\to \{1,-1\}\subset \C^\times$, given by $Y\to (-1)^{|I_Y|}$
is a group homomorphism independent of the choice of $I$ (see \S\ref{formula} below).

With this notation, we have the following
results, which are solely based on our work on the Weil representation of $\U(2m,B)$. Theorems \ref{mu1} and \ref{mu2} give
$$
G_T=\mu(T) G_S,
$$
$$
G_T^2=\va^{(|B|^m-1)/2}|B|^m,
$$
while Theorem \ref{mu55} shows that
\begin{equation}
\label{gaus6}
\mu(Y)=\mu(Y^*),\quad Y\in\GL(m,B).
\end{equation}
By considering $I^*$ instead of $I$ we deduce from (\ref{gaus6}) that $\mu$ remains invariant if we consider instead $B^m$ as a row space
and let $\GL(m,B)$ act on $B^m$ from the right.

Let us now describe our main results regarding the Weil representation. By definition, $\U=\U(2m,B)$ is the subgroup of all $Y\in\GL(2m,B)$ such that
$$
Y^* \begin{pmatrix} 0 & 1_m \\  \va 1_m & 0 \end{pmatrix} Y=\begin{pmatrix} 0 & 1_m \\  \va 1_m & 0 \end{pmatrix}.
$$

As in the classical case when $B$ is a finite field and $*$ is the identity, there is a Heisenberg group $H$ and a Schr$\mathrm{\ddot{o}}$dinger representation $S:H\to\GL(X)$ of type $\beta$
and degree $|B|^m$ that remains invariant under the natural action of $\U$ on $H$. This gives rise to a Weil representation $W:\U\to\GL(X)$
satisfying
$$
W(g)S(h)W(g)^{-1}=S({}^g h),\quad g\in \U, h\in H.
$$
Each $W(g)$ is an intertwining operator between $S$ and its conjugate representation $S^g$, and the map $g\mapsto W(g)$ is a group homomorphism. This material is expounded in \S\ref{schro} and \S\ref{formula}.

Our first goal is to obtain an explicit matrix description of $W(g)$ for each Bruhat element $g\in\U$. This is achieved in Theorem \ref{weilrepskewh}
in the skew hermitian case $\va=-1$ and in Theorem \ref{weilrepher} in the hermitian case $\va=1$. We first determine a projective representation
$P:\U\to\GL(X)$ consisting of intertwining operators $P(g)$ and then find correcting scalars $c(g)\in\C^\times$ such that $W(g)=P(g)c(g)$, with $W$ being
an ordinary representation. The work to compute the projective representation $P$ and the correcting factor $c$ is begun
in \S\ref{formula} and fully developed in \S\ref{ela} and \S\ref{fela}. At the very end of these calculations we split
our work into the skew hermitian case, done in \S\ref{skewsec}, and the hermitian case, done in \S\ref{sech}. These last two sections
also include the derivation of our prior formulas on generalized Gauss sums. Many of our difficulties are related to
the generality of the ring $B$.

Now associated to a ring $A$ with involution, also denoted by $*$, \cite{GPS} considers the group $\SL_*^\va(2,A)$,
which coincides with $\U(2m,B)$ when $A=M(m,B)$. Details can be found in \S\ref{uno}. Moreover, \cite{GPS} develops
an axiomatic method to produce a generalized Weil representation of $\SL_*^\va(2,A)$, provided $\SL_*^\va(2,A)$ is generated
by its Bruhat elements subject to certain canonical defining relations, namely the relations (R1)-(R6) found in~\S\ref{uno}.  
In \S\ref{comop} we suitably modify the axioms developed \cite{GPS} to produce a function defined on the Bruhat elements of $\U(2m,B)$ that preserves the relations (R1)-(R6) and coincides with the formulas
from Theorems \ref{weilrepskewh} and \ref{weilrepher} for a carefully chosen set of initial parameters required by the axioms. In simple terms, the Heisenberg and axiomatic
methods produce the same Weil representation. This is our second goal, achieved in Theorem \ref{main}, our main result.

The above theory is applicable to several families of rings, as shown in \S\ref{exam}. Needless to say, the classical cases of
Weil representations of $\Sp(2m,F_q)$ and $\U(2m,F_{q^2})$ are but very special members of these families.

Set $A=M(m,B)$. Then the Bruhat elements of $\U(2m,B)=\SL_*^\va(2,A)$ are, by definition:
$$
\omega=\begin{pmatrix} 0 & 1_m \\ \va 1_m & 0 \end{pmatrix},\; h_T=\begin{pmatrix} T & 0 \\ 0 & (T^*)^{-1} \end{pmatrix}, T \in \GL(m,B),\; u_S=\begin{pmatrix} 1_m & S \\ 0 & 1_m \end{pmatrix}, S+\va S^*=0.
$$
Let $\SSL_*^\va(2,A)$ be the subgroup of $\SL_*^\va(2,A)$ generated by its Bruhat elements. Our comparison of Weil representations involves $\SSL_*^\va(2,A)$
and we direct attention in \S\ref{sec:special} and \S\ref{sec:general} to study the relationship between $\SSL_*^\va(2,A)$ and $\SL_*^\va(2,A)$ when $B$ is local
but not necessarily finite. It turns out that $\SL_*^\va(2,A)=\SSL_*^\va(2,A)$ except when $\va=1$, $m$ is even and $*$ is ramified (as defined in \S\ref{uno}),
in which case $[\SL_*^\va(2,A):\SSL_*^\va(2,A)]=2$. Precise details are given in \S\ref{sec:special} and \S\ref{sec:general}. There is some subtlety to these results.
Indeed, the case $\va=-1$ follows smoothly from the case when $B=D$ is a division ring, first considered in \cite{PS}. However, the case $\va=1$ requires substantial effort
and it seems to be incorrectly stated in \cite{PS2}, as Proposition 8 therein would imply that $\SL_*^\va(2,A)=\SSL_*^\va(2,A)$, which is certainly not always true, as mentioned above. 
Whether $\va=1$ or $\va=-1$, we provide a list of generators for $\SL_*^\va(2,A)$ when $B$ is local. This is applicable to the classical cases when $\SL_*^\va(2,A)$ becomes
the symplectic group $\Sp(2m,F_q)$, the unitary group $\U(2m,F_{q^2})$ or the orthogonal group $\mathrm{O}(2m,F_q)$, as well as the more general case when $B$ is local and 
these classical groups are factors of $\SL_*^\va(2,A)$.

\section{The groups $\U(2m,B)$, $\SL_*^\va(2,A)$ and  $\SSL_*^\va(2,A)$ }\label{uno}

All rings in this paper are assumed to have an identity element different from zero. The unit group of a ring $B$ will be denoted by $B^\times$.

Let \( B \) be a ring, not necessarily commutative, having an involution $*$, that is, an antiautomorphism of order 1 or 2. We may extend $*$
to an involution, also denoted by $*$, of the full matrix ring $A=M(m,B)$ by declaring $(x^*)_{ij} = (x_{ji})^*$ for every $x\in A$.

We take $\varepsilon$ in $\{-1,1\}$ and let $V$ be a right $B$-module endowed with an $\va$-hermitian form $h:V\times V\to B$. This means that $h$ is $B$-linear in
the second variable and satisfies
$$
h(u,v)^*=\va h(v,u),\quad u,v\in V.
$$
Thus $h$ is hermitian $\varepsilon=1$ and $h$ is skew hermitian $\varepsilon=-1$.
We further assume that $V$ has a basis $\BB=\{u_1,\dots,u_m,v_1,\dots,v_m\}$ relative to which the Gram matrix of $h$ is equal to
$$ J = \begin{pmatrix}0 & 1_m \\ \va 1_m & 0
\end{pmatrix}.
$$
The unitary group $\U=\U(2m,B)$ is the group of all $g\in\GL(V)$ preserving $h$, in the sense
$$
h(gu,gv)=h(u,v),\quad u,v\in V.
$$
Let $g\in\GL(V)$ and suppose that the matrix $M_{\BB}(g)$ of $g$ relative to $\BB$ is equal to
$$
x=\begin{pmatrix} a & b \\ c & d \end{pmatrix} \in \GL(2,A).
$$
Then $g\in\U$ if and only if $x^* J x=J$, which translates as follows:
\begin{equation}
\label{eqelements}
a^*c =-\va c^*a,\; b^*d =-\va d^*b,\; d^*a +\va b^*c = 1.
\end{equation}
Following \cite{PS}, the group of all $x\in\GL(2,A)$ satisfying (\ref{eqelements}) will be denoted by
$\SL^\va_*(2,A)$. Thus the map $\U\to \SL^\va_*(2,A)$, given by $g\mapsto M_{\BB}(g)$, is a group isomorphism.

Set  $ A^{\varepsilon\text{-sym}}=\{s\in A\,|\, s+\varepsilon s^*=0\}$. Following \cite{PS} we define the Bruhat elements of
\( \SL^\va_*(2,A) \) by
$$
    \omega = \begin{pmatrix} 0&1 \\ \va & 0 \end{pmatrix},\;
    h_t = \begin{pmatrix} t & 0 \\ 0 & (t^*)^{-1} \end{pmatrix}, t \in A^{\times},\;
    u_r = \begin{pmatrix} 1 & r \\ 0 & 1 \end{pmatrix}, r \in A^{\varepsilon\text{-sym}}.
$$
One easily verifies that the following relations hold in \( \SL^\va_*(2,A)\):

(R1) $h_sh_{t}=h_{st},\; s,t\in A^\times$;

(R2) $u_qu_{r}=u_{q+r},\;  q,r \in A^{\varepsilon\text{-sym}}$;

(R3) $\omega^2=h_{\va}$;

(R4) $h_tu_r=u_{trt^*}h_t,\; t\in A^\times, r \in A^{\varepsilon\text{-sym}}$;

(R5) $\omega h_t=h_{t^{*-1}}\omega,\; t\in A^\times$;

(R6) $u_t \omega u_{-\va t^{-1}}\omega u_t = \omega h_{-t^{-1}},\; t\in A^\times\cap A^{\varepsilon\text{-sym}}$.

Let $\SSL^\va_*(2,A)$ be the subgroup of $\SL^\va_*(2,A)$ generated by the Bruhat elements.

We wish to determine the index of $\SSL^\va_*(2,A)$ in $\SL^\va_*(2,A)$. For this purpose, we suppose until the end of \S\ref{sec:general} that $B$ is a local ring with Jacobson radical $J(B)=\r$ and corresponding division ring $D=R/\r$. We also assume that $2\in B^\times$. We say that $*$ is ramified if it induces the identity map on $D$, in which case $D=\F$ is a field, and unramified otherwise. 
We set $\overline{A}=M_m(D)$.

\section{On the index of \( \SSL^{+}_*(2,A) \) in \( \SL^{+}_*(2,A)\): a special case}\label{sec:special}

We keep the assumptions and notation from \S\ref{uno}. We suppose in this section that $\va=1$ and $*$ is ramified.

If \( X = \begin{pmatrix} a&b\\c&d \end{pmatrix} \in M(2,A)\),
    we say that the \( (1,1) \)-block of \( X \) is \( a \), the \( (1,2) \)-block of \( X \) is \( b \) and so on.
    Let \( e \in A\) be the matrix whose only nonzero entry is a \( 1 \) in
    position \( (m,m) \), \( k = 1_m - e \) and
    \( T = \begin{pmatrix} k  & e \\ e & k \end{pmatrix} \in M(2,A)\). Then \( T^2 = 1_{2m} \) and,
    by (\ref{eqelements}), \( T\in \sll \).

We assume for the remainder of this section that $\r=0$. Thus for \( x \in A \), \( x^* \) is the transpose of \( x \).
Also, \( \det(T) = -1 \), \( \det(u_s) = \det(h_t) = 1 \) and 
		\( \det(w)= (-1)^m \). Hence for even \( m \), \( \det(Y) = 1 \) for all
    \( Y \in \ssl \). In particular \( T \not\in \ssl \) if \( m \) is even.

Let $Z=\F^m$ be the space of column vectors. The following two results are adaptations of \cite[Propositions 3.2 and 3.3]{PS} to
deal with the case $\va=1$.

\begin{lemma}
    Let \( W \) be a linear subspace of \( Z\), with
    \( \dim W \) odd (resp. even), and let $u\in W$ be nonzero (resp. zero). Then there exists \( s \in A \) such that
    \begin{itemize}
        \item \( s^* = -s \);
        \item \( \ker(s) \cap W = \F u \);
        \item \( sW = sZ \);
        \item \( sW \cap W^\perp = 0\) where \( W^\perp = \{ v \in Z|\, v^*w = 0\text{ for all }w \in W\} \).
    \end{itemize}
    \label{trans1}
\end{lemma}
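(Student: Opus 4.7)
The plan is to notice that the fourth bullet is redundant given the others, and then to construct $s$ by prescribing its kernel.

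First I would eliminate the orthogonality condition. Because $s^* = -s$, one has $\ker(s^*) = \ker(s)$, and since the standard pairing $(v,w) \mapsto v^* w$ on $Z = \F^m$ is non-degenerate,
\[
  \im(s) \;=\; \ker(s^*)^\perp \;=\; \ker(s)^\perp.
\]
The condition $sW = sZ$ is equivalent to $W + \ker(s) = Z$, so
\[
  sW \cap W^\perp \;=\; \ker(s)^\perp \cap W^\perp \;=\; (W + \ker(s))^\perp \;=\; Z^\perp \;=\; 0,
\]
and the fourth bullet comes for free from the first three.

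Next I would select a candidate subspace $K$ to serve as $\ker(s)$. Set $d = \dim W$. When $d = 2k+1$ is odd, extend the given nonzero $u$ to a basis of $W$ and then to a basis of $Z$ by vectors $x_1, \ldots, x_{m-2k-1}$, and set $K = \F u \oplus \mathrm{span}(x_1, \ldots, x_{m-2k-1})$. When $d = 2k$ is even (so $u = 0$), let $K$ be any linear complement of $W$ in $Z$. In both cases $K \cap W = \F u$ and $W + K = Z$, while $Z/K$ has the even dimension $2k$.

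Finally I would realise this $K$ as the kernel of a skew-symmetric matrix. Because $2 \in B^\times = \F^\times$, $\chr \F \neq 2$, so the even-dimensional space $Z/K$ admits a non-degenerate alternating form $B_0$. Pulling back along $Z \to Z/K$ gives an alternating form $B$ on $Z$ with radical exactly $K$, and then there is a unique matrix $s \in A$ with $v^* s w = B(v,w)$ for all $v, w \in Z$. Alternation of $B$ (together with $\chr \F \neq 2$) forces $s^* = -s$, while $\ker(s) = \mathrm{rad}(B) = K$. Hence $s$ meets the first three bullets by construction and the fourth by the reduction above. The main conceptual hurdle is spotting that the orthogonality condition is forced by skew-symmetry and surjectivity onto $sZ$; without that observation one is tempted into a much more delicate direct argument, whereas the above route reduces the lemma to a routine choice of complement plus the standard correspondence between alternating forms on $\F^m$ and skew-symmetric matrices.
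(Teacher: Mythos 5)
Your proof is correct and takes essentially the same route as the paper's: both realise $s$ as the Gram matrix of an alternating form on $Z$ whose radical is $\F u$ plus a complement of $W$ in $Z$ --- the paper builds the form on $W$ with kernel $\F u$ and extends it by zero on a complement, while you prescribe the kernel $K$ first and pull back a nondegenerate alternating form from the even-dimensional quotient $Z/K$. Your explicit derivation of the fourth bullet from the first three, via $sW=sZ=\im(s)=\ker(s)^\perp$ and $(W+\ker(s))^\perp=0$, is precisely the content of the paper's ``one readily checks.''
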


\begin{proof}
    Since \( \dim W \) is odd (resp. even) we can choose a skew symmetric form
    \( C \) on \( W \) whose kernel is \( \F u \).
    In other words \( C(w,W) = 0 \) if and only if \( w \in \F u \).
    Suppose that \( Z = W \oplus T \) and extend \( C \) to
    \( Z \) by \( C(w_1+t_1,w_2+t_2) = C(w_1,w_2) \). Now
    there is some \( s \in A \) such that \( C(v_1,v_2)
    = v_1^*sv_2\). One readily checks that \( s \) has all the required
    properties.
\end{proof}

\begin{lemma}
    Suppose that \( a, c \in A \) satisfy \( a^*c = -c^*a \), \( a \)
    has odd (resp. even) nullity and \( Aa + Ac = A \). Let \( u \in \ker(a) \)
    be nonzero (resp. zero). Then there is some \( s \in A \)
    such that \( s^* = -s  \) and \( \ker(a+sc) =  \F u  \).
    \label{coprime2}
\end{lemma}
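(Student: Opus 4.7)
The plan is to apply Lemma~\ref{trans1} to the subspace $U := cW \subseteq Z$, where $W := \ker(a)$. First I would note that the hypothesis $Aa + Ac = A$ furnishes $x, y \in A$ with $xa + yc = 1_m$, so every $v$ with $av = cv = 0$ satisfies $v = (xa+yc)v = 0$; hence $\ker(a) \cap \ker(c) = 0$, and in particular the restriction $c|_W$ is injective. Consequently $\dim U = \dim W$ has the prescribed parity, and $cu \in U$ is nonzero (resp. zero) exactly when $u$ is. Lemma~\ref{trans1}, applied to $U$ and the element $cu$, then produces $s \in A$ with $s^* = -s$, $\ker(s) \cap U = \F\, cu$, $sU = sZ$ and $sU \cap U^\perp = 0$.

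The main step is to prove $\ker(a+sc) \subseteq W$. The relation $a^*c + c^*a = 0$ implies, for every $v \in Z$ and every $w \in W$,
\[
(aw)^*(cv) + (cw)^*(av) = w^*(a^*c + c^*a)v = 0,
\]
and since $aw = 0$ this forces $(cw)^*(av) = 0$ for all $w \in W$, i.e., $\im(a) \subseteq U^\perp$. Now take $v \in \ker(a+sc)$, so that $av = -s(cv)$; then $s(cv) \in U^\perp$. Because $sZ = sU$, there exists $y \in U$ with $sy = s(cv)$, so $sy \in sU \cap U^\perp = 0$. Hence $s(cv) = 0$, whence $av = 0$ and $v \in W$.

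Finally, for such a $v \in W \cap \ker(a+sc)$ the equation $(a+sc)v = 0$ reduces to $s(cv) = 0$, so that $cv \in \ker(s) \cap U = \F\, cu$. In the odd case $cv = \lambda\, cu$ for some $\lambda \in \F$, and injectivity of $c|_W$ forces $v = \lambda u$; in the even case $cu = 0$ gives $cv = 0$, hence $v = 0$. The reverse inclusion $\F u \subseteq \ker(a+sc)$ is immediate from $au = 0$ together with $cu \in \ker(s)$, so $\ker(a+sc) = \F u$ as required. The subtle point is the second paragraph: combining the hermitian-type identity $a^*c = -c^*a$ with the orthogonality data from Lemma~\ref{trans1} to push $v$ into $W$ is where all four conclusions of that lemma are used simultaneously, and the whole argument hinges on first rerouting the problem from $W$ to $U = cW$.
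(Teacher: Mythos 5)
Your proof is correct and follows essentially the same route as the paper's: apply Lemma~\ref{trans1} to $c\ker(a)$ and the vector $cu$, use the identity $a^*c=-c^*a$ to place $aZ$ inside $(c\ker(a))^{\perp}$, and then combine $sU=sZ$ with $sU\cap U^{\perp}=0$ to force $\ker(a+sc)\subseteq\ker(a)$. The only (harmless) deviation is that the paper upgrades the inclusion $c\ker(a)\subseteq(aZ)^{\perp}$ to an equality by a dimension count, whereas you observe that the inclusion alone suffices.
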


\begin{proof}
    Clearly \( Aa+Ac = A\) implies that \( \ker(a) \cap \ker(c) = 0 \). Thus
    \( c: \ker(a) \cong c\ker(a) \). Also
    \( (ax)^*(cy) = -(cx)^*(ay) \), so \( c\ker(a) \subseteq (aZ)^\perp \). But
    \( \dim( (aZ)^\perp) = \dim(\ker(a)) = \dim(c\ker(a)) \) so \( c\ker(a) =
    (aZ)^\perp\). Equivalently \( (c\ker(a))^\perp = aZ \).

    Now apply Lemma \ref{trans1}
    to the space \( W=c\ker(a) \) and the vector $cu\in W$.  We find a
    skew symmetric \( s \in A \) such that \( \ker(s) \cap W = \F cu \),
    \( s W = sZ \), and \( s W \cap aZ = sW\cap W^\perp=0 \).
    Suppose that \( (a+sc)v = 0 \) for some \( v \in V \). Then
    \( av = -scv \). But \( -scv \in sZ = sW \). So \( av \in
    aZ \cap s W = 0\). Thus \( v \in \ker(a) \). Also \( scv = 0  \).
    Now since \( \ker(s) \cap W = \F cu \)  we see that \( cv
    \in \F cu\) and so \( v \in \F u \). Thus \( \ker(a+sc) = \F u \).
\end{proof}


    \begin{lemma}
        Suppose that \( X \in \sll \) and that at least one of \( a,c \)
        has even nullity. Then \( X \in \ssl \).
        \label{lem:evencorankcondition}
    \end{lemma}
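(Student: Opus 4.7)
The plan is to pre-multiply $X$ by a suitable element of $\ssl$ to arrange that either the $(1,1)$- or the $(2,1)$-block of the product is invertible, and then to exhibit a standard Bruhat factorization of such a matrix.

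First, note that $X \in \sll \subseteq \GL(2m,\F)$ forces $\ker(a) \cap \ker(c) = 0$, since any $v$ in this intersection satisfies $X\begin{pmatrix} v \\ 0 \end{pmatrix} = 0$. Translated into the language of left ideals of $A = M_m(\F)$, this says $Aa + Ac = A$, so the coprimality hypothesis of Lemma \ref{coprime2} is in force.

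If $a$ has even nullity, Lemma \ref{coprime2} applied with $u = 0$ supplies a skew-symmetric $s \in A$ with $a+sc$ invertible; then $u_s X \in \sll$ has invertible $(1,1)$-block. If instead $c$ has even nullity, observe that $a^*c = -c^*a$ is symmetric in $a$ and $c$ (taking $*$ gives $c^*a = -a^*c$, which is the same relation with $a$ and $c$ swapped, using $\va = 1$), and apply Lemma \ref{coprime2} with the roles of $a$ and $c$ interchanged to get skew-symmetric $s$ with $c+sa$ invertible. Pre-multiplying $X$ by $\omega u_s \omega = \begin{pmatrix} 1 & 0 \\ s & 1 \end{pmatrix} \in \ssl$ then yields a matrix with invertible $(2,1)$-block. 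Either way, the task reduces to showing that any $Y = \begin{pmatrix} a' & b' \\ c' & d' \end{pmatrix} \in \sll$ whose $(1,1)$- or $(2,1)$-block is invertible already lies in $\ssl$.

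This last step is a direct computation. When $a'$ is invertible, one verifies that
\[
Y \;=\; (\omega u_{c'{a'}^{-1}} \omega)\, h_{a'}\, u_{{a'}^{-1} b'},
\]
and when $c'$ is invertible one has the analogous factorization $Y = u_{a'{c'}^{-1}} \omega\, h_{c'}\, u_{{c'}^{-1} d'}$. The main technical point to verify is that the arguments of the $u$-factors are admissible, i.e.\ skew-symmetric: $(c'{a'}^{-1})^* = -c'{a'}^{-1}$ follows immediately from $a^*c = -c^*a$, while $({a'}^{-1} b')^* = -{a'}^{-1} b'$ requires the dual identity $a b^* + b a^* = 0$, which is obtained by rewriting the defining condition $X^* J X = J$ equivalently as $X J X^* = J$. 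Once these admissibility conditions are in hand, $Y$ is visibly a product of Bruhat generators, and consequently so is $X$.
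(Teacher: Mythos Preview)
Your proof is correct and follows essentially the same approach as the paper's: use Lemma~\ref{coprime2} to arrange (after left multiplication by an element of $\ssl$) that one block in the first column is invertible, then factorize. The paper is slightly more economical in that it reduces the case ``$c$ has even nullity'' to ``$a$ has even nullity'' by a single left multiplication by $\omega$ (which swaps $a$ and $c$), rather than treating the two cases in parallel as you do; and it suppresses the explicit Bruhat factorization you spell out, saying only that the result ``follows easily'' once the $(1,1)$-block is invertible.
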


    \begin{proof}
        Without loss of generality we can assume that \( a \) has even nullity (replace
        \( X \) by \( wX \) if necessary). Now by Lemma \ref{coprime2}
        there is some \( s = -s^*\) such that \( a+sc \in A^\times \).
        Now the \( (1,1) \)-block of \( u_sX \) is an element of
        \( A^\times \) and the result follows easily.
    \end{proof}

    \begin{theorem}
        Suppose that \( m \) is odd. Then \( \ssl = \sll \).
        \label{thm:indexoddcase}
    \end{theorem}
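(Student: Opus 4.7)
The plan is to reduce to Lemma \ref{lem:evencorankcondition}: it suffices to show that for any $X = \begin{pmatrix} a & b \\ c & d \end{pmatrix} \in \sll$, at least one of the blocks $a, c$ has even nullity. I will deduce this from the parity identity
$$
\dim \ker a + \dim \ker c \equiv m \pmod{2};
$$
since $m$ is odd, the sum is then odd, so at least one summand is even, and Lemma \ref{lem:evencorankcondition} applies.

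To prove the parity identity I would exploit that $a^*c$ is skew, which follows directly from the unitary relation $a^*c + c^*a = 0$ in (\ref{eqelements}). Since $\F$ has characteristic different from $2$, every skew matrix over $\F$ has even rank, so $\rk(a^*c)$ is even. Next, for any $v \in \ker a$ one has $(a^*c)v = -(c^*a)v = 0$, whence $c(\ker a) \subseteq \ker a^*$. The other unitary relation $d^*a + b^*c = 1$ gives $\ker a \cap \ker c = 0$, so $c$ is injective on $\ker a$; combining this with $\dim \ker a^* = \dim \ker a$ forces $c(\ker a) = \ker a^*$ and, in particular, $\ker a^* \subseteq \im c$. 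Applying rank--nullity to the restriction $a^*|_{\im c}$ then yields
$$
\rk(a^*c) = \dim(\im c) - \dim(\ker a^* \cap \im c) = (m - \dim \ker c) - \dim \ker a,
$$
and evenness of the left-hand side is precisely the claimed parity identity.

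The main obstacle is spotting the right invariant. Lemma \ref{coprime2} on its own can only reduce the nullity of the $(1,1)$-block to $0$ (in the even case) or to $1$ (in the odd case), so it cannot cross the parity barrier. The essential point is that the skewness of $a^*c$ ties the parities of $\dim\ker a$ and $\dim\ker c$ to $m\pmod 2$, so for odd $m$ one can always play $a$ against $c$ (swapping via $\omega$, as is done inside the proof of Lemma \ref{lem:evencorankcondition}) to land in the even-nullity regime. This is also consistent with the fact that the conclusion must fail for even $m$: in that case the parity identity allows both $\dim\ker a$ and $\dim\ker c$ to be odd, and the element $T$ fixed at the start of the section can indeed escape $\ssl$.
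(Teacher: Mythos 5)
Your proof is correct, and it takes a genuinely different route from the paper's. The paper argues by reduction to a normal form: after using Lemma \ref{coprime2} and Lemma \ref{lem:evencorankcondition} to arrange that the $(2,1)$-block has nullity $1$, it normalizes that block to $k$, then left-multiplies by $u_{-kak}$ to annihilate the first $m-1$ rows of the $(1,1)$-block, which therefore has nullity $m-1$, even for odd $m$; Lemma \ref{lem:evencorankcondition} then finishes. You instead establish the intrinsic parity invariant $\dim\ker a+\dim\ker c\equiv m\pmod 2$, valid for \emph{every} element of $\sll$, and feed it directly into Lemma \ref{lem:evencorankcondition}. Each step of your derivation checks out: $a^*c$ is skew by (\ref{eqelements}) (recall $*$ is the transpose here since $\r=0$), hence has even rank in characteristic $\neq 2$; the relation $d^*a+b^*c=1$ gives $\ker a\cap\ker c=0$; and the chain $c(\ker a)=\ker a^*\subseteq\im c$ together with rank--nullity applied to $a^*|_{\im c}$ yields $\rk(a^*c)=(m-\dim\ker c)-\dim\ker a$. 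What your approach buys is conceptual transparency: it isolates the exact obstruction, avoids the normal-form manipulations (and any direct use of Lemma \ref{coprime2} inside this proof), and explains in one line why the statement must fail for even $m$ --- as your check on the element $T$, with $\dim\ker k+\dim\ker e=1+(m-1)=m$, confirms. The only extra input is the standard fact that alternating matrices over a field of odd characteristic have even rank, which the paper's argument does not need.
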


    \begin{proof}
        Suppose \( X \in \sll \). Clearly for any \( Z \in \ssl \), we have
        $$ X \in \ssl \Leftrightarrow ZX \in \ssl \Leftrightarrow XZ \in
        \ssl.$$ We use this observation repeateadly to reduce \( X \)
        to a suitably nice form.
        By Lemmas \ref{coprime2} and \ref{lem:evencorankcondition}
        we can assume that \( c \) has nullity 1.
        Now there are
        \( t_1,t_2 \in \GL(m,\F ) \) such that \( t_1ct_2 = k\).
        By replacing \( X \) with \( h_{t_1^{*-1}}Xh_{t_2} \)
        we can assume that
        \( c = k \).
        By (\ref{eqelements}) we see that \( a^*k = -ka \). So
        \( (kak)^* = ka^*k = -k^2a = -ka\). Also $-kak = a^*k^2 =
        a^*k = -ka$. In other words \( (kak)^* = -kak \).
        Now, replacing \( X \) by \( u_{-kak}X \) we can arrange that
        the first \( m-1 \) rows of \( a \) are \( 0 \).
        But \( Aa+Ac = A \) so \( a \neq 0 \).
        Thus \( a \) has nullity \( m-1 \) which is even. It
        follows from Lemma \ref{lem:evencorankcondition}
        that \( X \in \ssl \).
    \end{proof}

We want to use a similar idea to show that $[\SL_*^+(2,A):\SSL_*^+(2,A)]=2$ when $m$ is even. However the argument
    is complicated by the fact that we do not know, a priori, that 
    \( \ssl \unlhd \sll \). So first we need to prove the following.

    \begin{lemma}
        \( T \) normalises \( \ssl \).
        \label{lem:normalises}
    \end{lemma}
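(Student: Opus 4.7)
Since $T^2 = 1_{2m}$, showing that $T$ normalises $\ssl$ reduces to checking that $TgT \in \ssl$ for each of the three Bruhat generators $g = \omega, h_t, u_s$. The plan is to carry out this verification by direct $2 \times 2$ block computation in $M(2,A)$, using repeatedly the identities $k^2 = k$, $e^2 = e$, $ke = ek = 0$, $k + e = 1_m$, together with $s^* = -s$ (hence $s_{mm} = 0$ and $ese = 0$) for any skew-symmetric $s$.

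The case $g = \omega$ collapses immediately to $T\omega T = \omega$. For $g = u_s$ the same kind of calculation yields
\[
T u_s T = \begin{pmatrix} 1_m + kse & ksk \\ 0 & 1_m + esk \end{pmatrix}.
\]
I would then observe that $1_m + kse$ is invertible with inverse $1_m - kse$ (because $(kse)^2 = 0$, using $ek = 0$), that $(1_m + kse)^* = 1_m - esk$ is the inverse of $1_m + esk$, and that $ksk$ is skew-symmetric. These facts identify $T u_s T$ as the product $h_{1_m + kse}\, u_{ksk}$ of two Bruhat elements, so it lies in $\ssl$.

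The case $g = h_t$ is the hard one. For a generic $t \in \GL(m, \F)$ the $(1,1)$-block of $T h_t T$ need not be invertible, so no direct Bruhat decomposition is available. To get around this, I would pick a generating set of $\GL(m, \F)$ adapted to the block decomposition $t = \begin{pmatrix} a & b \\ c & d \end{pmatrix}$ with $a \in M(m-1, \F)$ and $d \in \F$, namely (i) the elements with $b = c = 0$, (ii) the transvections $1_m + \lambda E_{i,m}$ with $i < m$, and (iii) the transvections $1_m + \lambda E_{m,j}$ with $j < m$. A routine block expansion of $T h_t T$ shows that the conjugate equals, respectively, (i) $h_{t'}$ with $t' = \begin{pmatrix} a & 0 \\ 0 & d^{-1} \end{pmatrix}$, (ii) $u_B$ with some skew-symmetric $B$, and (iii) $\omega u_C \omega$ with some skew-symmetric $C$. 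Each of these lies in $\ssl$, and an LDU-style reduction shows that (i)--(iii) together generate $\GL(m, \F)$. Since conjugation by $T$ is a group endomorphism of $\sll$, this yields $T h_t T \in \ssl$ for every $t \in \GL(m, \F)$ and completes the proof. The main technical hurdle is precisely this failure of the $(1,1)$-block of $T h_t T$ to be invertible in general, which forces the detour through a carefully chosen generating set of $\GL(m, \F)$.
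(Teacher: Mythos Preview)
Your proof is correct. The treatment of $\omega$ and $u_s$ is essentially the paper's (the paper merely notes that the $(2,1)$-block of $Tu_sT$ vanishes because $s_{mm}=0$, whereas you go one step further and write down the explicit factorisation $h_{1_m+kse}\,u_{ksk}$).

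The genuine divergence is in the $h_t$ case. The paper does \emph{not} pass to a generating set of $\GL(m,\F)$. Instead it computes the $(1,1)$-block of $Th_tT$ directly: with $f$ the $(m-1)\times(m-1)$ principal minor of $t$ and $y$ the $(m,m)$-entry of $t^{*-1}$, this block is $\begin{pmatrix} f & 0 \\ 0 & y\end{pmatrix}$. Cramer's rule gives $y=\det(t^*)^{-1}\det(f)$, so $y=0$ exactly when $\rk(f)=m-2$; hence the $(1,1)$-block always has nullity $0$ or $2$, and Lemma~\ref{lem:evencorankcondition} (the even-nullity criterion) finishes the argument. Your route, by contrast, sidesteps Lemma~\ref{lem:evencorankcondition} entirely: by checking the three generator types you reduce everything to explicit Bruhat words. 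This makes your argument self-contained and more elementary, at the cost of the extra case analysis and the (routine) verification that your three families generate $\GL(m,\F)$. The paper's argument is shorter and dovetails with the machinery already in place for Theorems~\ref{thm:indexoddcase} and~\ref{thm:indexevencase}.
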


    \begin{proof}
        We will show that the conjugate of each Bruhat element
        by \( T \) lies in \( \ssl \).

        Direct calculation shows that \( TwT^{-1}  = w \).
        For \( Tu_sT^{-1} \), observe that the only possible nonzero
        entry in the \( (2,1) \) block occurs in position
        \( (m,m) \) position of that block and is equal to the \( (m,m) \)
        entry of \( s \) which is \( 0 \) since \( s^* = -s \).
        Therefore the \( (2,1) \)-block
        of \( Tu_sT^{-1} \) is \( 0 \) and it follows immediately
        that \( Tu_sT^{-1}\in \ssl \).
				
			  Finally we claim that, for \( t \in A^\times \),
        \( Th_tT^{-1} \in \ssl \). 
        Let \( f \in M(m-1,\F ) \) be the matrix obtained by deleting the
        \( m \)th row and \( m \)th column of \( t \) and let $y$ be the
				 \( (m,m) \) entry of \( t^{*-1} \). Then the
				\( (1,1) \)-block of \( Th_tT^{-1}  \) is
        \[ a = \begin{pmatrix} f & 0 \\ 0 & y \end{pmatrix}. \]
				Since $t\in A^\times$, we have
        \( \rk(f) \geq m-2\). Moreover, by Cramer's rule, \(y= \det(t^*)^{-1} \det(f) \),
				which is
        \( 0 \) if and only if \( \rk(f) = m-2 \). It follows that if \( \rk(f) = m-1 \) then \( \rk(a) = m \)
        and if \( \rk(f) = m-2 \) then \( \rk(a) = m-2 \).
        In either case, the nullity of \( a \) is even and so \( Th_tT^{-1} \in \ssl \)
        by Lemma \ref{lem:evencorankcondition}.
    \end{proof}

    \begin{theorem}
        If \( m \) is even then
        \( \sll = \ssl \rtimes \langle T \rangle \)
        \label{thm:indexevencase}
    \end{theorem}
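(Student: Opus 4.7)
The plan is to verify three facts which together yield the semidirect-product decomposition (using $T^2=1_{2m}$): (i) $T \notin \ssl$, which was already observed via the determinant computation stated just above Theorem \ref{thm:indexoddcase}; (ii) $T$ normalises $\ssl$, which is Lemma \ref{lem:normalises}; and (iii) $\sll = \ssl \cup \ssl T$. Only (iii) requires new argument.

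For (iii), I would imitate the reduction in the proof of Theorem \ref{thm:indexoddcase}. Note first that pre- or post-multiplication by an element of $\ssl$ preserves both cosets $\ssl$ and $\ssl T$, by (ii), which lets one move $T$ past any element of $\ssl$. Fix $X\in\sll$ with blocks $(a,b,c,d)$. If $c$ has even nullity, Lemma \ref{lem:evencorankcondition} gives $X\in\ssl$. Otherwise, Lemma \ref{coprime2} and a diagonal Bruhat element $h_{t_1^{*-1}}Xh_{t_2}$ reduce us to the case $c=k$. The calculation in the proof of Theorem \ref{thm:indexoddcase} then carries over verbatim: $(kak)^*=-kak$, so $u_{-kak}$ is a Bruhat element, and after replacing $X$ by $u_{-kak}X$ the resulting $(1,1)$-block $a'$ is supported only in its $m$-th row. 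The relation $Aa+Ac=A$ then forces $a_{mm}\neq 0$.

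The key new step is to compute the $(2,1)$-block of $TX$ for this reduced $X$: with $T = \begin{pmatrix} k & e \\ e & k \end{pmatrix}$ and $c = k$, this block is
\[
e a' + k c \;=\; e a' + k,
\]
which is lower triangular with diagonal $(1,\dots,1,a_{mm})$, hence invertible and of nullity $0$. Lemma \ref{lem:evencorankcondition} applied to $TX$ then yields $TX\in\ssl$, whence $X \in T\ssl = \ssl T$, completing (iii). The main obstacle is exactly the parity mismatch: the reduced $a'$ has nullity $m-1$, which is odd when $m$ is even, so Lemma \ref{lem:evencorankcondition} is not available directly for $X$; the purpose of left multiplication by $T$ is to mix the $m$-th and $(2m)$-th rows of $X$ and thereby convert the offending block into an invertible $(2,1)$-block of $TX$, restoring the parity needed to apply Lemma \ref{lem:evencorankcondition}.
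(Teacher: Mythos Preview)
Your proof is correct and follows essentially the same approach as the paper. The only cosmetic difference is that the paper reduces to the case where the $(1,1)$-block equals $k$ and the $(2,1)$-block is supported in its $m$-th row (i.e., your reduction composed with $\omega$), and then checks that the $(1,1)$-block of $TX$ is lower triangular and invertible rather than the $(2,1)$-block; the computations are symmetric.
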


    \begin{proof}
        It suffices to show that for any
        \( X \in \sll  \) such that \( X \not\in \ssl \),
        we have \( TX \in \ssl \).
        By Lemma \ref{lem:normalises}, \( TX \in \ssl \)
        if and only if \( TGXH \in \ssl \)
        for any \( G,H \in \ssl \).
        In other words we can replace \( X \) by \( GXH \) where
        \( G,H \) are any elements of \( \ssl \).
        As in the proof of Theorem \ref{thm:indexoddcase}
        we can therefore assume that the
        \( (1,1) \)-block of \( X \) is \( k \) and that the \( (2,1) \)-block
        of \( X \) has zeroes in rows \(1,\dots, m-1 \) and has a nonzero
        \( (m,m) \)-entry. Now a straighforward computation shows that
        the \( (1,1) \)-block of \( TX \) is
        lower triangular with nonzero diagonal entries and is
        therefore nonsingular.
        So \( TX \in \ssl \) as required.
    \end{proof}

\section{On the index of \( \SSL^{\va}_*(2,A) \) in \( \SL^{\va}_*(2,A)\): the general case}\label{sec:general}

We continue to maintain the assumptions and notation from \S\ref{uno}.

\begin{theorem}\label{jopa} If $\r=0$ then $\SL^{-}_*(2,A)=\SSL^{-}_*(2,A)$.
\end{theorem}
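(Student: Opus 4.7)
Since $\r=0$, we have $A=M(m,D)$ with $D$ a division ring, and for $\va=-1$ the set $A^{\va\text{-sym}}$ consists of the $*$-symmetric matrices. The plan is to adapt the strategy of \S\ref{sec:special} to this setting, exploiting that non-degenerate $*$-symmetric sesquilinear forms on a $D$-space exist on subspaces of any dimension: this removes the parity obstructions encountered in the hermitian case and yields $\SL^{-}_*(2,A)=\SSL^{-}_*(2,A)$ without any exception.

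The first step is to show that any $X=\begin{pmatrix} a & b \\ c & d \end{pmatrix}\in\SL^{-}_*(2,A)$ with $a\in A^\times$ lies in $\SSL^{-}_*(2,A)$. Using (\ref{eqelements}) with $\va=-1$ one verifies that $ca^{-1}=(a^*)^{-1}c^*$ and $a^{-1}b=b^*(a^*)^{-1}$ are both $*$-symmetric, and that $d-ca^{-1}b=(a^*)^{-1}$. Since $\omega u_r \omega^{-1} = \begin{pmatrix} 1 & 0 \\ -r & 1 \end{pmatrix}$ when $\va=-1$, this gives the factorization
\[
X=(\omega u_{-ca^{-1}}\omega^{-1})\cdot h_a\cdot u_{a^{-1}b},
\]
a product of Bruhat elements.

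The second step reduces the general case to the first: for any $X\in\SL^{-}_*(2,A)$, we seek $s=s^*\in A$ with $a+sc\in A^\times$, so that $u_s X$ has invertible $(1,1)$-block and lies in $\SSL^{-}_*(2,A)$ by the first step, forcing $X=u_{-s}(u_sX)\in\SSL^{-}_*(2,A)$. The hypotheses of the required lemma are immediate: (\ref{eqelements}) yields $d^*a-b^*c=1$ (so $Aa+Ac=A$) and $a^*c=c^*a$. The existence of such $s$ is the skew-hermitian analog of Lemma \ref{coprime2}, whose proof mirrors the one in \S\ref{sec:special} but uses a non-degenerate $*$-symmetric sesquilinear form on $W=c\ker(a)$ in place of the skew-symmetric form appearing in Lemma \ref{trans1}; such forms exist on any subspace irrespective of its dimension. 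This existence result is essentially \cite[Proposition 3.3]{PS} and constitutes the main (and essentially only) obstacle; the remainder is routine manipulation of (\ref{eqelements}) and the relations (R1)--(R6).
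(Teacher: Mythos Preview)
Your proposal is correct and takes essentially the same approach as the paper: the paper's proof is simply a citation to \cite[Corollary 3.4]{PS}, and what you have written is an accurate outline of that argument, correctly isolating \cite[Proposition 3.3]{PS} (the existence of $s=s^*$ with $a+sc\in A^\times$) as the key input and filling in the routine Bruhat factorisation when the $(1,1)$-block is invertible.
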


 \begin{proof} This is shown in \cite[Corollary 3.4]{PS}.
\end{proof}

\begin{theorem}\label{jopa2} If $\r=0$,  $*$ is unramfied and $D=\F$ is a field then $\SL^{+}_*(2,A)=\SSL^{+}_*(2,A)$.
\end{theorem}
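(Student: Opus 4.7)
The plan is to adapt the arguments of Section \ref{sec:special} to the unramified case, leveraging the following key new ingredient: since $*$ acts nontrivially on $\F$ and $2\in\F^\times$, there exists $\alpha\in\F^\times$ with $\alpha^*=-\alpha$. Multiplying any nondegenerate hermitian form by $\alpha$ yields a nondegenerate skew-hermitian form, so such forms exist on $\F$-spaces of arbitrary dimension---erasing the parity obstruction that complicated Section \ref{sec:special}.

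First I would prove the unramified analog of Lemma \ref{trans1}: for \emph{any} subspace $W\subseteq Z=\F^m$, there is a skew-hermitian $s\in A$ (i.e.\ $s^*=-s$) with $\ker(s)\cap W=0$, $sW=sZ$, and $sW\cap W^\perp=0$, where $W^\perp=\{v\in Z : v^*w=0\text{ for all }w\in W\}$. The argument mimics that of Lemma \ref{trans1}, taking $C(x,y)=\alpha\langle x,y\rangle$ on $W$ with $\langle\cdot,\cdot\rangle$ any nondegenerate hermitian form on $W$, extended by zero on a complement of $W$; one checks directly that the matrix $s$ representing $C$ is skew-hermitian and satisfies the three incidence conditions, with no parity restriction. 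From this, the unramified analog of Lemma \ref{coprime2} follows by repeating that proof verbatim with $u=0$ applied to $W=c\ker(a)=(aZ)^\perp$: whenever $a^*c=-c^*a$ and $Aa+Ac=A$, there exists skew-hermitian $s$ with $\ker(a+sc)=0$, i.e.\ $a+sc\in\GL(m,\F)$.

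Given $X=\begin{pmatrix} a & b \\ c & d\end{pmatrix}\in\SL^+_*(2,A)$, the relation $d^*a+b^*c=1$ gives $Aa+Ac=A$, so the above produces skew-hermitian $s$ with $a+sc$ invertible. Replacing $X$ by $u_sX\in u_s\cdot X$, we may assume $a\in\GL(m,\F)$. The relation $a^*c+c^*a=0$ forces $ca^{-1}$ to be skew-hermitian; similarly $X^*JX=J$ together with $J^2=1_{2m}$ implies $XJX^*=J$, whence $ab^*+ba^*=0$ and thus $a^{-1}b$ is skew-hermitian. A short computation using the identity $a^*d+c^*b=1$ (the adjoint of $d^*a+b^*c=1$) yields
\[
X=(\omega u_{ca^{-1}}\omega^{-1})\, h_a\, u_{a^{-1}b},
\]
which exhibits $X$ as a product of Bruhat elements (note $\omega^{-1}=h_{\va}\omega\in\SSL^+_*(2,A)$ by $(R3)$).

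The principal obstacle is establishing the unramified version of Lemma \ref{trans1}; this is precisely where the unramified hypothesis on $*$ enters decisively, since a skew-symmetric (i.e.\ ramified) form on an odd-dimensional space necessarily has a kernel, forcing the two-case statement of Lemma \ref{trans1} and ultimately producing the index-two subgroup appearing in Theorem \ref{thm:indexevencase}. Everything else reduces to the same linear algebra plus the Bruhat decomposition in the invertible $(1,1)$-block case.
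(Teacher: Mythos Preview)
Your argument is correct, but the paper takes a much shorter route. Rather than re-running the coprime machinery of \S\ref{sec:special}, the paper uses the skew unit $u\in\F^\times$ with $u^*=-u$ \emph{globally}: conjugation by $P=\mathrm{diag}(1_m,u^{-1}1_m)$ carries the skew-hermitian group $\SL^{-}_*(2,A)$ (for the form $J_-$) isomorphically onto the hermitian group $\SL^{+}_*(2,A)$ (for the form $J_+$), and one checks directly that $\psi(h_t)=h_t$, $\psi(J_-)=J_+h_{-u}$, and $\psi(u_s)=u_{su^{-1}}$. Since Theorem~\ref{jopa} already gives $\SL^{-}_*(2,A)=\SSL^{-}_*(2,A)$, the image of its Bruhat generators under $\psi$ are again products of Bruhat elements, and the result follows in a few lines.

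Your approach uses the same unit $\alpha$ \emph{locally}, to manufacture nondegenerate skew-hermitian forms on subspaces of arbitrary dimension and thereby remove the parity obstruction in Lemmas~\ref{trans1}--\ref{coprime2}; you then finish with the explicit Bruhat factorisation $X=(\omega u_{ca^{-1}}\omega^{-1})h_a u_{a^{-1}b}$ once the $(1,1)$-block has been made invertible. This is self-contained (it does not invoke Theorem~\ref{jopa}) and is closer in spirit to \S\ref{sec:special}, at the cost of redoing work that the paper sidesteps by the single conjugation above. Both proofs ultimately hinge on exactly the same hypothesis, namely the existence of a $*$-skew unit in $\F$.
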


\begin{proof} By assumption there is a unit $u\in\F$ such that $u^*=-u$. Set
$$J_-=\begin{pmatrix} 0 & 1_m\\-1_m & 0 \end{pmatrix}, J_+=\begin{pmatrix} 0 & 1_m\\1_m & 0 \end{pmatrix}, K=\begin{pmatrix} 0 & u 1_m\\u 1_m & 0 \end{pmatrix},
P=\begin{pmatrix} 1_m & 0\\0 & u^{-1}1_m\end{pmatrix}
$$
and let $G_1, G_2$ and $G_3$ be the unitary groups respectively associated to $J_-, J_+$ and $K$. Observing that $G_2=G_3$ and
$P^*KP=J_-$, the map $\psi:G_1\to G_2$ given by $g\mapsto P^{-1}gP$ is a group isomorphism. Moreover, $\psi(h_t)=h_t$, $t\in A^\times$, $\psi(J_-)=J_+h_{-u}$ and
$\psi(u_s)=u_{su^{-1}}$, $s\in A$,  $s^*=s$. Furthermore, by Theorem \ref{jopa}, $G_1$ is generated by $J_-$ and all $h_t,u_s$,  $s^*=s$. We conclude that $G_2$ is generated
by $J_+$ and all $h_t,u_{r}$,  $r^*=-r$, as required.
\end{proof}

\begin{theorem}\label{jopa3} The reduction homomorphism $\Omega:\SL_*^\va(2,A)\to \SL_*^\va(2,\overline{A})$ is surjective.
\end{theorem}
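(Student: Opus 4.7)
The plan is to lift a suitable generating set for $\SL_*^\va(2, \overline{A})$ element by element; since $\Omega$ is a group homomorphism, this suffices for surjectivity. The generating results of the previous two sections (Theorems \ref{jopa}, \ref{jopa2}, \ref{thm:indexoddcase}, and \ref{thm:indexevencase}), applied with base ring $D$ in place of $B$, exhibit $\SL_*^\va(2, \overline{A})$ as generated by its Bruhat elements $\omega$, $h_t$, $u_s$, augmented by the element $T$ in the single exceptional situation $\va = 1$, $*$ ramified, $m$ even.

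First I would verify that each Bruhat generator admits a Bruhat preimage. The element $\omega$ has entries in $\{0, 1, \va\} \subseteq B$ and lifts to itself. For $h_t$ with $t \in \GL(m, D)$, any preimage $\tilde{t} \in M(m, B)$ automatically lies in $A^\times$: since $B$ is local, $M(m, B)$ has Jacobson radical $M(m, \r)$ with semisimple quotient $M(m, D)$, so invertibility in the quotient lifts to invertibility in $A$. Hence $h_{\tilde{t}} \in \SL_*^\va(2, A)$ reduces to $h_t$. For $u_s$ with $s + \va s^* = 0$ in $\overline{A}$, I would pick any preimage $\tilde{s}_0 \in A$ of $s$ and replace it by $\tilde{s} = (\tilde{s}_0 - \va \tilde{s}_0^*)/2$, well-defined because $2 \in B^\times$. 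A short direct computation gives $\tilde{s} + \va \tilde{s}^* = 0$ exactly, while the reduction of $\tilde{s}$ equals $(s - \va s^*)/2 = s$ (using $-\va s^* = s$ in $\overline{A}$). Thus $u_{\tilde{s}}$ is a preimage of $u_s$.

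For the exceptional case, the matrix $T = \begin{pmatrix} k & e \\ e & k \end{pmatrix}$ already has entries in $\{0, 1\} \subseteq B$, so the same matrix belongs to $\SL_*^+(2, A)$ and reduces to the corresponding $T \in \SL_*^+(2, \overline{A})$. Combining these preimages and invoking the homomorphism property of $\Omega$ completes the argument.

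The main subtlety I anticipate is the choice of lift for $u_s$, where one must simultaneously match the reduction and exactly enforce the $\va$-skew/symmetry relation; the symmetrization displayed above resolves this cleanly, relying on $2 \in B^\times$. Everything else amounts to routine bookkeeping against the generating results already proved, plus the standard fact that units in $M(m, B)$ are detected by their images in $M(m, D)$ when $B$ is local.
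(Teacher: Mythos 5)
Your proposal is correct and follows essentially the same route as the paper's proof: lift the Bruhat generators (plus $T$ in the exceptional case) using the local-ring fact that units of $M(m,B)$ are detected modulo $J(M(m,B)) = M(m,\r)$, and the symmetrization $(\tilde{s}_0 - \va\tilde{s}_0^*)/2$ for the unipotent generators, then invoke the generating theorems for $\SL_*^\va(2,\overline{A})$.
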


\begin{proof} Clearly the canonical projection $A\to \overline{A}$ is surjective. We claim that the corresponding maps $A^\times\to  \overline{A}^\times$
and $A^{\varepsilon\text{-sym}}\to \overline{A}^{\varepsilon\text{-sym}}$ are also surjective. 
Indeed, if $x\in A$ and $\overline{x}\in \overline{A}^\times$ there is $y\in A$
such that $xy\equiv 1_m\equiv yx\mod M(m,\r)$. It is well known that 
\( M(m,\r) = J(M(m,B)) \),  therefore $x\in A^\times$. Moreover, if $s\in A$ and $\overline{s}\in \overline{A}^{\varepsilon\text{-sym}}$ then $s+\va s^*\in M(m,\r)$.
Set $s_0=(s-\va s^*)/2$. Clearly $s_0\in A^{\varepsilon\text{-sym}}$. Furthermore, since $s=(s-\va s^*)/2+(s+\va s^*)/2$, we have $\overline{s_0}=\overline{s}$.

The above claims and Theorems \ref{thm:indexoddcase}, \ref{thm:indexevencase}, \ref{jopa} and \ref{jopa2} show that all generators of $\SL_*^\va(2,\overline{A})$
can be reached.
\end{proof}

\begin{theorem}\label{jopa4} We have $\SL_*^\va(2,A)=\SSL_*^\va(2,A)$, except when $\va=1$, $*$ is ramified and $m$ is even, in which case
$\SL_*^\va(2,A)=\SSL_*^\va(2,A)\rtimes \langle T\rangle$, where $\SSL_*^\va(2,A)$ is the $\Omega$-preimage of the special orthogonal group $\mathrm{SO}_{2m}(\F)$.
\end{theorem}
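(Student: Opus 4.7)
The plan is to pass from the general situation to the field case via the reduction homomorphism $\Omega:\SL_*^\va(2,A)\to\SL_*^\va(2,\overline A)$, which is surjective by Theorem \ref{jopa3}. Granting that $\ker(\Omega)\subseteq\SSL_*^\va(2,A)$, the index $[\SL_*^\va(2,A):\SSL_*^\va(2,A)]$ coincides with $[\SL_*^\va(2,\overline A):\SSL_*^\va(2,\overline A)]$, which has just been computed in Theorems \ref{thm:indexoddcase}, \ref{thm:indexevencase}, \ref{jopa} and \ref{jopa2}.

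The kernel lemma rests on a Bruhat-type factorisation. If $X=\begin{pmatrix} a & b\\ c & d\end{pmatrix}\in\SL_*^\va(2,A)$ has $a\in A^\times$, then the relations (\ref{eqelements}), together with their companions derived from $XJX^*=J$, show that $\va ca^{-1}$ and $a^{-1}b$ lie in $A^{\va\text{-sym}}$ and that $d-ca^{-1}b=(a^*)^{-1}$. A direct block computation then yields
$$
X=(\omega\, u_{\va c a^{-1}}\,\omega^{-1})\, h_a\, u_{a^{-1}b},
$$
manifestly a product of Bruhat elements. If $X\in\ker(\Omega)$, then $a-1_m\in M(m,\r)=J(A)$, so $a\in 1_m+J(A)\subseteq A^\times$ and the factorisation applies, placing $X$ in $\SSL_*^\va(2,A)$.

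Combining this with the surjectivity of $\Omega$ and the field-case results, we immediately obtain $\SL_*^\va(2,A)=\SSL_*^\va(2,A)$ outside the exceptional case $\va=1$, $*$ ramified, $m$ even. In the exceptional case, the matrix $T$ defined in \S\ref{sec:special} has entries in $\{0,1\}$ and so makes sense over $A$; pulling the semidirect decomposition of Theorem \ref{thm:indexevencase} back through $\Omega$ via the kernel lemma yields $\SL_*^+(2,A)=\SSL_*^+(2,A)\rtimes\langle T\rangle$. The main subtlety is identifying $\Omega(\SSL_*^+(2,A))=\SSL_*^+(2,\overline A)$ with $\mathrm{SO}_{2m}(\F)$. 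Since $*$ is ramified, the involution on $\overline A=M_m(\F)$ is the transpose, and one computes $\det(h_t)=1$, $\det(u_s)=1$ and $\det(\omega)=(-1)^m=1$ (as $m$ is even); hence $\SSL_*^+(2,\overline A)\subseteq\mathrm{SO}_{2m}(\F)$. Both subgroups have index $2$ in $\SL_*^+(2,\overline A)=\mathrm{O}_{2m}(\F)$, so they coincide, and therefore $\SSL_*^+(2,A)=\Omega^{-1}(\mathrm{SO}_{2m}(\F))$.
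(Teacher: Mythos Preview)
Your proof is correct and follows essentially the same route as the paper's: establish $\ker(\Omega)\subseteq\SSL_*^\va(2,A)$ via invertibility of the $(1,1)$-block, then pull back the field-case index computations through the surjective $\Omega$, and in the exceptional case identify $\SSL_*^+(2,\overline A)$ with $\mathrm{SO}_{2m}(\F)$ by the determinant argument. The one cosmetic difference is that you spell out the Bruhat factorisation $X=(\omega\,u_{\va ca^{-1}}\,\omega^{-1})\,h_a\,u_{a^{-1}b}$ explicitly, whereas the paper simply asserts that an invertible $(1,1)$-block forces $X\in\SSL_*^\va(2,A)$.
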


\begin{proof} Note first of all that $\ker(\Omega)\subset\SSL_*^\va(2,A)$.  Indeed, let $X\in \ker(\Omega)$.
Then $X$ is an element of $\SL_*^\va(2,A)$ of the form $X=1_{2m}+Y$, where $Y\in M(2m,\r)$. The (1,1) block of $Y$ equals $1_m+Z$, where  $Z\in M(m,\r)=J(M(m,B))$,
so the (1,1) block $X$ is in $A^\times$ and therefore $X\in \SSL_*^\va(2,A)$.

Combining Theorems \ref{thm:indexoddcase}, \ref{jopa} and \ref{jopa2} and \ref{jopa3} with $\ker(\Omega)\subset\SSL_*^\va(2,A)$, we see that $\SL_*^\va(2,A)=\SSL_*^\va(2,A)$, except when $\va=1$, $*$ is ramified and $m$ is even. In this case, Theorems \ref{thm:indexevencase} and \ref{jopa3} with $\ker(\Omega)\subset\SSL_*^+(2,A)$ yield
$\SL_*^+(2,A)=\SSL_*^+(2,A)\rtimes \langle T\rangle$. Now $\Omega(\SSL_*^+(2,A))\subseteq \mathrm{SO}_{2m}(\F)$ and since
$[\SL_*^\va(2,A):\SSL_*^\va(2,A)]=2=[\mathrm{O}_{2m}(\F):\mathrm{SO}_{2m}(\F)]$, the kernel of $\SL_*^+(2,A)\to \mathrm{O}_{2m}(\F)/\mathrm{SO}_{2m}(\F)$
is precisely $\SSL_*^+(2,A)$.
\end{proof}

It is shown in \cite{CS} that if $|D|>3$ then (R1)-(R6) are defining relations for $\SL_*^{-}(2,A)$. On the other hand, Example \ref{notlocal} shows that if $B$ is not local then $\SL_*^{-}(2,A)$
need not be generated by its Bruhat elements.

\section{Basic Assumptions}\label{rifo}


We henceforth fix $\va$ in $\{1,-1\}$ and a ring $B$, not necessarily commutative, subject to the following assumptions:

(A1) $B$ is finite.

(A2) $2\in B^\times$.

(A3) There is an involution $*$ on $B$.

(A4) There is a group homomorphism $\beta:B^+\to\C^\times$ that is primitive, in the sense that its kernel contains no right ideals of $B$ except $(0)$.

(A5) There is a nonzero finite right $B$-module $V$ and a hermitian or skew hermitian form $h:V\times V\to B$ relative to $*$. Moreover, $h$ is assumed to be nondegenerate,
in the sense that $h(u,V)=0$ implies $u=0$.

(A6) $\beta(b+\va b^*)=1$ for all $b\in B$.

\section{The Schr$\mathrm{\ddot{o}}$dinger representation}\label{schro}

Let $\U$ stand for the unitary group associated to $h$, that is,
$$
\U=\{x\in\GL(V)\,|\, h(xu,xv)=h(u,v)\text{ for all }u,v\in V\}.
$$
The Heisenberg group $H$ associated to $h$ has underlying set $B\times V$ and multiplication
$$
(b,u)(c,v)=(b+c+h(u,v),u+v).
$$
We have an action of $\U$ on $H$ via automorphisms as follows
$$
{}^g (b,u)=(b,gu).
$$
We identify the central subgroup $(B,0)$ of $H$ with $B^+$.

Given a $B$-submodule $N$ of $V$ we set
$$
N^\perp=\{u\in V\,|\, h(u,v)=0\text{ for all }v\in N\},\; N^\dagger=\{u\in V\,|\, \beta(2h(u,v))=1\text{ for all }v\in N\}.
$$
\begin{lemma}\label{da} If $N$ is a $B$-submodule of $V$ then $N^\perp=N^\dagger$.
\end{lemma}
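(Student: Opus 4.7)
The inclusion $N^\perp\subseteq N^\dagger$ is immediate, since $h(u,v)=0$ forces $\beta(2h(u,v))=\beta(0)=1$. The content of the lemma is the reverse containment, and my plan is to deduce it by exploiting the right $B$-module structure of $N$ together with the primitivity assumption (A4) on $\beta$ and the invertibility of $2$ from (A2).

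Fix $u\in N^\dagger$ and an arbitrary $v\in N$. Because $N$ is a $B$-submodule, $vb\in N$ for every $b\in B$, so the defining condition gives $\beta(2h(u,vb))=1$ for all $b\in B$. Using that $h$ is $B$-linear in the second variable, this rewrites as $\beta(2h(u,v)b)=1$, i.e.\ the principal right ideal $2h(u,v)B$ is contained in $\ker(\beta)$.

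Now I would invoke (A4): since $\ker(\beta)$ contains no nonzero right ideal of $B$, we must have $2h(u,v)B=0$. Specialising to $b=1$ yields $2h(u,v)=0$, and then (A2) (invertibility of $2$) gives $h(u,v)=0$. Since $v\in N$ was arbitrary, $u\in N^\perp$.

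I do not anticipate a serious obstacle: everything is a one-line bookkeeping argument once one notices that the right $B$-linearity of $h$ in its second argument turns the single condition $\beta(2h(u,v))=1$ into a statement about the entire right ideal $2h(u,v)B$, which is exactly the form in which the primitivity hypothesis (A4) is designed to be applied. The only mild point worth stating carefully is the use of $2\in B^\times$ at the very end; without (A2) the argument would only give $2h(u,v)=0$, which is weaker than $h(u,v)=0$ in general.
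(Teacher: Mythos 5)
Your argument is correct and is essentially the paper's own proof: the paper applies (A4) to the right ideal $\{2h(u,v)\mid v\in N\}$ all at once, whereas you apply it to each principal right ideal $2h(u,v)B$ separately, but the mechanism (right $B$-linearity of $h$ in the second variable producing a right ideal inside $\ker\beta$, then (A2) to remove the factor of $2$) is identical. No gaps.
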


\begin{proof} It is clear that $N^\perp\subseteq N^\dagger$. Suppose $u\in N^\dagger$.  Then the set $\{2h(u,v)\,|\, v\in N\}$ is a right ideal of $B$ contained in the kernel of~$\beta$,
so $2h(u,v)=0$ for all $v\in N$  by (A4). It follows from (A2) that $u\in N^\perp$, as required.
\end{proof}

\begin{lemma}\label{cxs} Let $N$ be any $B$-submodule of $V$. Then
$$
|V|=|N||N^\perp|.
$$
\end{lemma}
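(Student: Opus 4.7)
My plan is to exploit the character $\beta$ together with Lemma \ref{da} to promote $h$ to a perfect $\C^\times$-valued pairing on the abelian group $V^+$, and then read off the count via Pontryagin duality for finite abelian groups.

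First I would define a map $\Phi:V^+\to\Hom(V^+,\C^\times)$ by $\Phi(u)(v)=\beta(2h(u,v))$. Bi-additivity of $h$ together with the fact that $\beta$ is a group homomorphism ensures both that $\Phi(u)$ is a character of $V^+$ and that $\Phi$ itself is a group homomorphism. The kernel of $\Phi$ is $V^\dagger$, which by Lemma \ref{da} applied to $N=V$ equals $V^\perp$, and this vanishes by the nondegeneracy assumption (A5). Hence $\Phi$ is injective, and since $V^+$ and $\Hom(V^+,\C^\times)$ are finite abelian groups of equal order, $\Phi$ is a bijection.

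Next I would invoke the standard Pontryagin-duality fact that for any subgroup $N^+$ of a finite abelian group $V^+$, the restriction map $\rho:\Hom(V^+,\C^\times)\to\Hom(N^+,\C^\times)$ is surjective with kernel of order $|V|/|N|$. Pulling $\ker\rho$ back through the bijection $\Phi$ yields exactly the set $N^\dagger$ by the very definition of $\Phi$, and by Lemma \ref{da} this coincides with $N^\perp$. Therefore $|N^\perp|=|V|/|N|$, which is the desired identity.

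There is no serious obstacle here: once one recognises that (A4) combined with the nondegeneracy of $h$ turns $h$ into a perfect self-pairing of $V^+$ via $\beta$, the size of $N^\perp$ is forced by duality. The only points requiring a line of verification are that $\Phi(u)$ really lands in $\Hom(V^+,\C^\times)$ and that $\Phi$ is itself a homomorphism, both immediate from the additivity of $h$ in each slot; everything else is bookkeeping with finite abelian character groups.
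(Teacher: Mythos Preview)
Your argument is correct and is the standard Pontryagin-duality proof of this identity. The paper itself gives no details, simply deferring to \cite[Lemma 2.1]{CMS2}, and your explicit argument via the perfect pairing $\Phi$ and the annihilator count is almost certainly what is carried out there.
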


\begin{proof} Use (A5) and (A6) to mimic the proof of \cite[Lemma 2.1]{CMS2}.
\end{proof}

\begin{theorem} There is one and only one irreducible character, say $\chi_\beta$, of $H$ lying over~$\beta$. In particular, $\chi_\beta$ is $\U$-invariant.
\end{theorem}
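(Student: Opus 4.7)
The plan is to carry out the classical Stone--von Neumann argument in its Mackey formulation: construct $\chi_\beta$ by induction from a polarisation, check irreducibility by computing one Mackey inner product, and read off both uniqueness and $\U$-invariance from a dimension count.

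First I would fix a totally isotropic $B$-submodule $L\subseteq V$ with $L=L^\perp$, for instance $L=u_1B\oplus\cdots\oplus u_mB$ built from the basis $\BB$ of \S\ref{uno}. By Lemma \ref{cxs} this forces $|V|=|L|^2$. Set $K=\{(b,v)\in H : b\in B,\ v\in L\}$. Because $h(L,L)=0$, the multiplication on $K$ reduces to the componentwise addition $(b,v)(c,w)=(b+c,v+w)$, so $K$ is an abelian subgroup. A direct computation gives, for $x=(0,w)$,
$$x^{-1}(b,v)x=(b+h(v,w)-h(w,v)-h(w,w),\,v),$$
whose second coordinate remains in $L$; hence $K\lhd H$. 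Extend $\beta$ to the linear character $\tilde\beta$ of $K$ by $\tilde\beta(b,v)=\beta(b)$ and let $\chi_\beta$ be the character of $\mathrm{Ind}_K^H\tilde\beta$, so $\chi_\beta(1)=[H:K]=|L|$.

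For irreducibility I would use the Mackey formula for normal subgroups,
$$\langle\chi_\beta,\chi_\beta\rangle_H=\sum_{xK\in H/K}\langle\tilde\beta,\tilde\beta^x\rangle_K.$$
Using the relation $h(u,v)^*=\va h(v,u)$ together with (A6) to rewrite $\beta(y-\va y^*)=\beta(2y)$, the displayed conjugation formula yields $\tilde\beta^x(b,v)=\beta(b)\beta(2h(v,w))\beta(-h(w,w))$ for $x=(0,w)$, and hence
$$\langle\tilde\beta,\tilde\beta^x\rangle_K=\frac{\beta(h(w,w))}{|L|}\sum_{v\in L}\beta(-2h(v,w)).$$
The map $v\mapsto -2h(v,w)$ is a $B^+$-homomorphism $L\to B^+$, and by $B$-linearity of $h$ in the second slot combined with the $*$-identity, its image is a left ideal of $B$. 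By (A4) and the left--right symmetry of primitivity for finite Frobenius rings, this image lies in $\ker\beta$ iff it is zero, iff $w\in L^\perp=L$ (equivalently iff $xK=K$, in which case $h(w,w)=0$ and the inner product equals $1$). All other cosets contribute $0$, so $\langle\chi_\beta,\chi_\beta\rangle_H=1$.

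For uniqueness, $(B,0)$ is central in $H$, so any irreducible $\chi$ lying over $\beta$ restricts to $(B,0)$ as $\chi(1)\beta$; Frobenius reciprocity then gives
$$\sum_{\chi\text{ over }\beta}\chi(1)^2=\mathrm{Ind}_{(B,0)}^H\beta\,(1)=[H:(B,0)]=|V|.$$
Since $\chi_\beta(1)^2=|L|^2=|V|$ already exhausts this sum, $\chi_\beta$ is the unique irreducible character of $H$ over $\beta$. The $\U$-invariance is then immediate: the action ${}^g(b,u)=(b,gu)$ fixes $(B,0)$ pointwise, so $\U$ permutes the irreducibles over $\beta$, and this set being a singleton forces $\chi_\beta$ to be $\U$-invariant. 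The main delicate point is the bookkeeping in the conjugation computation, where one must use (A6) to collapse the $\va$-symmetric terms and invoke the left--right symmetry of primitivity to characterise the vanishing of $\sum_{v\in L}\beta(-2h(v,w))$ exactly in terms of $w\notin L^\perp$.
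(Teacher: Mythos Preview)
Your argument follows the same Stone--von Neumann template as the paper: induce $\beta$ from a polarisation $(B,L)$ with $L=L^\perp$, establish irreducibility via Clifford--Mackey theory (you compute the Mackey inner product, the paper computes the inertia group of $\tilde\beta$; these are equivalent), and deduce uniqueness from the dimension identity $\chi_\beta(1)^2=|L|^2=|V|=\deg\mathrm{Ind}_{(B,0)}^H\beta$.

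There is one genuine gap. You assume a Lagrangian $L=L^\perp$ exists by pointing to the basis $\BB$ of \S\ref{uno}, but the theorem is stated under (A1)--(A6) only, \emph{before} (A7) is introduced, and $V$ is not assumed free there. The paper does not assume such an $L$; it \emph{constructs} one: take $M\subseteq V$ maximal subject to $\beta(2h(M,M))=1$, note via (A6) that $\beta(2h(u,u))=1$ for every $u\in V$, so maximality forces $M=M^\dagger$, and then Lemma~\ref{da} gives $M=M^\perp$. You should insert this step rather than appeal to \S\ref{uno}.

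Two minor slips. First, your conjugation formula carries a spurious $-h(w,w)$; direct computation gives $x^{-1}(b,v)x=(b+h(v,w)-h(w,v),\,v)$. This is harmless for the argument, since the extra factor $\beta(h(w,w))$ equals $1$ when $w\in L$ and is irrelevant once the character sum over $v\in L$ vanishes. Second, your detour through left ideals and left--right symmetry of Frobenius rings is unnecessary: using (A6) one has $\beta(-2h(v,w))=\beta(2h(w,v))$, so the vanishing criterion is exactly $w\in L^\dagger$, and Lemma~\ref{da} then gives $L^\dagger=L^\perp$ directly.
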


\begin{proof} Let $M$ be a $B$-submodule of $V$ that is maximal relative to $\beta(2h(M,M))=0$. By definition, $M\subseteq M^\dagger$. However, (A6) yields
$$
\beta(2h(u,u))=1,\quad u\in V,
$$
so the maximality of $M$ implies that $M=M^\dagger$.

Consider the normal abelian subgroup $(B,M)$ of $H$ and extend $\beta$ to a group homomorphism $\beta_0:(B,M)\to\C^\times$ by
$$
\beta_0(b,u)=\beta(b).
$$
By definition, the stabilizer of $\beta_0$ in $H$, say $S_{\beta_0}$, consists of all $(c,v)\in H$ such that
$$
\beta(h(v,u)-\va h(v,u)^*)=1,\quad u\in M.
$$
Due to (A6) this translates into
$$
\beta(2h(v,M))=1.
$$
This means that $S_{\beta_0}=(B,M^\dagger)=(B,M)$. By Clifford theory,
$$
\chi_\beta=\mathrm{ind}_{(B,M)}^H \beta_0
$$
is an irreducible character of $H$ satisfying
$$
\mathrm{res}_{B^+}^H \chi_\beta=\frac{|V|}{|M|}\beta.
$$
It follows by Frobenius reciprocity that
\begin{equation}\label{fro1}
(\chi_\beta, \mathrm{ind}_{B^+}^H  \beta)=\frac{|V|}{|M|}.
\end{equation}
By Lemma \ref{da}, we have $M^\perp=M^\dagger=M$, whence $|M|^2=|V|$ by Lemma \ref{cxs}.
Therefore
\begin{equation}\label{fro2}
\deg \mathrm{ind}_{B^+}^H  \beta =|V|=\deg \frac{|V|}{|M|} \chi_\beta.
\end{equation}
Combining (\ref{fro1}) and (\ref{fro2}) we obtain
$$
\mathrm{ind}_{B^+}^H \beta= \frac{|V|}{|M|} \chi_\beta.
$$
By Frobenius reciprocity, $\chi_\beta$ is the only irreducible character of $H$ lying over $\beta$.
\end{proof}

\section{The Weil representation}\label{formula}

Given $B$-submodules $M,N$ of $V$ we consider the subgroups $\U_M$, $\U_{M,N}$ and $U^M$ of $U$, defined as follows:
$$
\U_M=\{g\in\U\,|\, gM=M\},
$$
$$
\U_{M,N}=\U_M\cap \U_N=\{g\in\U\,|\, gM=M\text{ and }gN=N\},
$$
$$
\U^M=\{g\in\U\,|\, gu=u\text{ for all }u\in M\}.
$$

We make the following general assumption:

(A7) There exist $B$-submodules $M,N$ of $V$ such that $V=M\oplus N$ and
$$
h(M,M)=0=h(N,N).
$$

We will keep $M,N$ for the remainder of the paper. It is clear from (A5) and (A7) that $M=M^\perp$. Thus Lemma \ref{da} yields $M=M^\dagger$,
whence $M$ is maximal subject to $\beta(2 h(M,M))=1$.

We next construct an irreducible $H$-module affording $\chi_\beta$. Let $Y=\C y$ be a one dimensional $(B,M)$-module
affording $\beta_0$, so that
$$
(b,u)\cdot y=\beta(b)\cdot y,\quad b\in B,u\in M.
$$
Then $H$ acts on $\ind_{(B,M)}^{H} Y$ with character $\chi_\beta$. For our purposes, it will be convenient to make $Y$ into a
module over $(B,M)\rtimes \U_M$ via
$$
(b,u)g\cdot y =\beta(b)\cdot y, \quad b\in B,u\in M, g\in\U_M,
$$
and to replace $\ind_{(B,M)}^{H} Y$ by
$$
X=\ind_{(B,M)\rtimes \U_M}^{H\rtimes \U_M} Y.
$$
It is clear that $H$ also acts on $X$ with character $\chi_\beta$. Let $S:H\to \GL(X)$ be the representations arising from
the action of $H$  on $X$. Since $\chi_\beta$ is $\U$-invariant, given any $g\in\U$ there is a unique operator $P(g)\in\GL(X)$, up to scaling, such that
\begin{equation}
\label{inter}
P(g)S(h) P(g)^{-1}=S({}^g h),\quad h\in H.
\end{equation}
Let $P:\U_M\to \GL(X)$ be the representation arising from the action of $\U_M$ on $X$. Clearly, given any $g\in\U_M$, the operator $P(g)$ satisfies (\ref{inter}).
We have a basis $(e_v)_{v\in N}$ of $X$, where $e_v=(0,v)y\in X$ for $v\in N$. Direct calculation shows that
\begin{equation}
\label{gza}
S(0,w)e_v=e_{v+w},\quad v,w\in N,
\end{equation}
\begin{equation}
\label{gzb}
S(0,u)e_v=\beta(2h(u,v))e_{v},\quad u\in M,v\in N.
\end{equation}
\begin{equation}
\label{gzc}
S(b,0)e_v=\beta(b)e_{v},\quad b\in B,v\in N,
\end{equation}
\begin{equation}
\label{gzd}
P(g)e_v=e_{gv},\quad g\in \U_{M,N},v\in N,
\end{equation}
\begin{equation}
\label{gze}
P(g)e_v=\beta(h(gv,v))e_{v},\quad g\in \U^M,v\in N.
\end{equation}
As a special case of (\ref{gzd}), we have
$$
P(-1_V)e_v=e_{-v},\quad v\in N.
$$
Let $X_{\pm}$ be the eigenspaces of $X$ with eigenvalues $\pm 1$ for $P(-1_V)$. Let $I$ be a subset of $N\setminus\{0\}$ obtained by
selecting one and only one element out of $\{v,-v\}$ for every $v\in N\setminus\{0\}$. Then $e_0$ and $e_{v}+e_{-v}$, with $v\in I$,
form a basis of $X_+$ and $e_{v}-e_{-v}$, with $v\in I$, form a basis of $X_{-}$.

Let $P:\U\to\GL(X)$ be any function satisfying (\ref{inter}) and extending the group homomorphism $P:\U_M\to\GL(X)$ defined above.

\begin{theorem}\label{weilrep} The subspaces $X_\pm$ are invariant under all $P(g)$, $g\in\U$. Moreover, let
$$
c(g)=(\det P(g)|_{X_+})^{-1}\det P(g)|_{X_-}\in\C^\times, \quad g\in \U.
$$
Then $W:\U\to\GL(X)$, given by $W(g)=P(g)c(g)$, is a representation (called Weil representation of type $\beta$).
\end{theorem}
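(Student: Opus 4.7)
The proof plan rests on two observations: $-1_V$ is central in $\U$ and lies in $\U_{M,N}$, so $P(-1_V)$ is unambiguously specified by (\ref{gzd}) and is an involution sending $e_v\mapsto e_{-v}$; and $S$ is irreducible, so Schur's lemma forces any two operators satisfying (\ref{inter}) for the same $g$ to be proportional. These two facts will be used repeatedly.

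For the invariance of $X_\pm$ under each $P(g)$, $g\in\U$, I would note that centrality of $-1_V$ implies that $P(-1_V)P(g)P(-1_V)^{-1}$ also satisfies (\ref{inter}) for $g$; Schur then yields a scalar $\mu(g)\in\C^\times$ with $P(-1_V)P(g)P(-1_V)^{-1}=\mu(g)P(g)$. Applying this twice and using $P(-1_V)^2=1_X$ yields $\mu(g)^2=1$, hence $\mu(g)\in\{\pm 1\}$. If $\mu(g)=-1$ then $P(g)$ would swap $X_+$ and $X_-$; but the displayed bases give $\dim X_+=|I|+1$ and $\dim X_-=|I|$, which differ, so this case is impossible. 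Thus $\mu(g)=1$, $P(g)$ commutes with $P(-1_V)$, and both eigenspaces are $P(g)$-stable.

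For the homomorphism property of $W$, a second application of Schur yields a cocycle $\gamma(g_1,g_2)\in\C^\times$ such that $P(g_1g_2)=\gamma(g_1,g_2)P(g_1)P(g_2)$, so that $P$ is a projective representation. Restricting this identity to $X_\pm$ and taking determinants gives
\[
c(g_1g_2) = \gamma(g_1,g_2)^{\dim X_- - \dim X_+}\, c(g_1) c(g_2) = \gamma(g_1,g_2)^{-1}\, c(g_1) c(g_2),
\]
since $\dim X_- - \dim X_+ = -1$. Consequently
\[
W(g_1g_2) = P(g_1g_2)c(g_1g_2) = \gamma(g_1,g_2)P(g_1)P(g_2)\cdot \gamma(g_1,g_2)^{-1}c(g_1)c(g_2) = W(g_1)W(g_2).
\]

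The pivotal feature making both steps work is the unit dimension imbalance $\dim X_+-\dim X_-=1$, arising from the single $(-1_V)$-fixed basis vector $e_0$. This odd asymmetry simultaneously forbids the eigenspace swap in the invariance argument and ensures that the correcting scalar $c$ transforms under rescalings $P(g)\mapsto \alpha P(g)$ with exponent exactly inverse to the cocycle ambiguity in $P$, so that $c$ absorbs $\gamma$ and lifts $P$ to an ordinary representation $W$. I do not anticipate a substantive obstacle; this is the standard determinantal trick for passing from a projective to an ordinary Weil representation, adapted here to the unitary setting via the $\pm 1$-eigenspace decomposition for $P(-1_V)$.
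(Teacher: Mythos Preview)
Your argument is correct and is precisely the standard determinantal correction that the paper defers to \cite[\S3]{CMS}; indeed, the paper later invokes the commutation $P(g)P(-1_V)=P(-1_V)P(g)$ with the same citation, confirming that your proof reconstructs the intended one. There is no substantive difference in approach.
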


\begin{proof} This follows as in \cite[\S3]{CMS}.
\end{proof}

It is clear that for all $g\in\U$, we have
\begin{equation}
\label{interw}
W(g)S(h) W(g)^{-1}=S({}^g h),\quad h\in H.
\end{equation}
A Weil representation is uniquely determined by (\ref{interw}) up to a linear character of $\U$.

From (\ref{gzd}), (\ref{gze}) and the definition of $c(g)$  we easily find that
\begin{equation}
\label{f2}
W(g)e_v=\beta(h(gv,v)) e_{v},\quad v\in N,g\in\U^M
\end{equation}
and
\begin{equation}
\label{f1G}
W(g)e_v=(-1)^{|I_g|}e_{gv},\quad v\in N, g\in \U_{M,N},
\end{equation}
where
$$
I_g=\{v\in I\,|\, gv\in -I\}.
$$
Since $P$ and $W$ are group homomorphisms from $\U_{M,N}$, it follows that so is $c$. In other words, the function
$$
\mu:\U_{M,N}\to \{-1,1\}\subset\C^\times,\quad
$$
defined by
\begin{equation}
\label{anj}
\mu(g)=(-1)^{|I_g|},\quad g\in \U_{M,N},
\end{equation}
is a group homomorphism independent of the choice of $I$.

By the ``free case" we understand the case when $M$ has a basis $\{u_1,\dots, u_m\}$, $N$ has a basis $\{v_1,\dots,v_m\}$,
and the Gram matrix of $h$ relative to the basis $\BB=\{u_1,\dots, u_m,v_1,\dots,v_m\}$ of $V$ is
$$ J = \begin{pmatrix}0 & 1_m \\ \va 1_m & 0
\end{pmatrix}.
$$

We next translate the above into matrix form, within the free case. Set $A=M(m,B)$ and denote also by $*$ the involution that $A$
inherits from $B$, as indicated in \S\ref{uno}. Recall the definition of the Bruhat elements $\omega, h_t,u_r\in \SL_*^{\varepsilon}(2,A)$
and the fact that the map $\U\to \SL^\va_*(2,A)$, given by $g\mapsto M_{\BB}(g)$, is a group isomorphism. Under this isomorphism, $\U_{M,N}$
(resp. $\U^M$) corresponds to the subgroup of $\SL_*^{\varepsilon}(2,A)$ of all $h_t$, $t\in A^\times$ (resp. all $u_r$, $r\in A^{\varepsilon\text{-sym}}$). It follows that $\U_M=\U^M\rtimes \U_{M,N}$.

Let $X$ be a complex vector space with basis $(e_a)_{a\in B^m}$. The above gives the representation $W:\SL_*^{\varepsilon}(2,A)\to\GL(X)$,
where
\begin{equation}
\label{g2}
W(u_S)e_a=\beta(-\va a^*Sa) e_{a},\quad a\in B^m,S\in A^{\varepsilon\text{-sym}},
\end{equation}
\begin{equation}
\label{g1G}
W(h_T)e_a=\mu(T^*)e_{(T^*)^{-1}a},\quad a\in B^m, T\in A^\times,
\end{equation}
where $\mu:\GL(m,B)\to\{-1,1\}$ is the matrix analogue of group homomorphism defined by (\ref{anj}), where now $N$ is replaced by the column space
$B^m$. We have used the obvious fact that $\mu(T^{-1})=\mu(T)$ and we will see later that $\mu(T^*)=\mu(T)$.

\section{Computing $P$ and $c$}\label{ela}

We go back to the general case. Let $g\in\U$ and suppose $P(g)\in\GL(X)$ satisfies (\ref{inter}). Then by (\ref{inter}) and (\ref{gza})
\begin{equation}
\label{pu}
P(g)e_v=P(g)S(0,v)e_0=S^g(0,v)P(g)e_0=S(0,gv)P(g)e_0,\quad v\in N.
\end{equation}
This says that $P(g)$ is uniquely determined by $x_0=P(g)e_0\neq 0$. To find $x_0$ note that (\ref{inter}) and (\ref{gzb}) give
$$
S^g(0,u)x_0=S^g(0,u)P(g)e_0=P(g)S(0,u)e_0=P(g)e_0=x_0.
$$
This says that $x_0$ is a fixed point of all operators $S^g(0,u)$, $u\in M$. Since the space of points that is fixed by all
$S(0,u)$, $u\in M$, is $\C e_0$ (use that $h$ is nondegenerate and $\beta$ is primitive) and $S$ and $S^g$ are similar, we see
that the space of points that is fixed by all $S^g(0,u)$, $u\in M$, is also one dimensional and equal to $\C x_0$.
This gives a unique way, up to scaling, to determine $P(g)$: by finding a (nonzero) fixed point of all $S^g(0,u)$, $u\in M$.

We make a new general assumption:

(A8) There exists $z\in\U$ such that $z(M)=N$ and $z(N)=M$,

\noindent and apply the foregoing ideas to the case of $g=z$. It is clear that
$$
x_0=\underset{w\in N}\sum e_w
$$
is a fixed point of all $S(0,v)$, $v\in N$, that is, all $S(0,zu)$, $u\in M$. Using this fixed point, (\ref{gzb}) and (\ref{pu}), we obtain
\begin{equation}
\label{pu2}
P(z)e_v=\underset{w\in N}\sum \beta(2h(zv,w))e_w,\quad v\in N.
\end{equation}

We illustrate this in the free case by obtaining a matrix version of (\ref{pu2}) for special values of $z$. Consider
the unitary transformations $s,t\in\U$, respectively defined by
$$
u_i\mapsto v_i,\; v_i\mapsto \va u_i,\quad 1\leq i\leq m,
$$
$$
u_i\mapsto \va v_i,\; v_i\mapsto u_i,\quad 1\leq i\leq m.
$$
It is clear that both satisfy (A8). Given $v\in N$, we have
$$
v=v_1h(u_1,v)+\cdots+v_m h(u_m,v)
$$
and therefore
$$
sv=\va(u_1h(u_1,v)+\cdots+u_m h(u_m,v)).
$$
Hence
$$
\begin{aligned}
h(sv,w)& =h(\va(u_1h(u_1,v)+\cdots+u_m h(u_m,v)),v_1h(u_1,w)+\cdots+v_m h(u_m,w))\\
&=\va(h(u_1,v)^*h(u_1,w)+\cdots+h(u_m,v)^*h(u_m,w)).
\end{aligned}
$$
Thus
\begin{equation}
\label{pu3}
P(s)e_v=\underset{w\in N}\sum \beta(\va\; 2 \underset{1\leq i\leq m}\sum h(u_i,v)^*h(u_i,w)),\quad v\in N.
\end{equation}
In matrix form, this gives
\begin{equation}
\label{pu4}
P(s)e_a=\underset{b\in B^m}\sum \beta(\va\; 2a^* b)e_b,\quad a\in B^m.
\end{equation}
Likewise, since
$$
tv=u_1h(u_1,v)+\cdots+u_m h(u_m,v),
$$
we have
\begin{equation}
\label{tu3}
P(t)e_v=\underset{w\in N}\sum \beta(2 \underset{1\leq i\leq m}\sum h(u_i,v)^*h(u_i,w)),\quad v\in N
\end{equation}
and
\begin{equation}
\label{tu4}
P(t)e_a=\underset{b\in B^m}\sum \beta(2a^* b)e_b,\quad a\in B^m.
\end{equation}

Going back to the general case, we next compute $c(z)$ up to a sign (a more precise calculation is given below). To this end
we make a new general assumption:

(A9) $z^2=\va 1_V$.

This is certainly satisfied in the free case with $z=s$ and $z=t$.

Let us deal first with the case $\va=-1$. Due to (\ref{inter}) and Schur's Lemma, $P(z)^2$ and $W(-1_V)$ differ by a scalar multiple. Entry (1,1) (with respect to the basis $e_v$, $v\in N$) of the matrix of $W(-1_V)$ is equal to $(-1)^{\frac{|N|-1}{2}}$. On the other hand, entry $(1,1)$ of the matrix of $P(z)^2$ is equal to $|N|$. Now $W(z)=c(z)P(z)$, so $W(-1_V)=c(z)^2 P(z)^2$. It follows that
$$
(-1)^{\frac{|N|-1}{2}}=c(z)^2 |N|,
$$
whence
$$
c(z)^2=\frac{(-1)^{\frac{|N|-1}{2}}}{|N|}.
$$
This determines $c(z)$ up to a sign. Note that in the free case, we have $|N|=|B|^m$, so
$$
c(z)^2=\frac{(-1)^{\frac{|N|-1}{2}}}{|B|^m}.
$$

In the case $\va=1$, $P(z)^2$ and $W(1_V)$ differ by a scalar multiple, and the same calculation as above shows that
$$
c(z)^2=\frac{1}{|N|},
$$
which in the free case becomes
$$
c(z)^2=\frac{1}{|B|^m}.
$$

\section{Further calculations involving $P$ and $c$}\label{fela}

From now on we work exclusively in the free case.

Let $T\in\GL(m,B)$ and suppose $T^*+\va T=0$. The existence of such a $T$ is clear in the skew hermitian case, as well as in the hermitian case when $m$ is even.
No such a $T$ exists in the hermitian case when $m$ is odd in the ramified even case (as defined in \S\ref{exam}).

Associated to $T$ we have the elements $g_T,k_T,\ell_T\in \U$ whose matrices relative to $\BB$ are respectively equal to
$$
\left(
    \begin{array}{cc}
      1_m & 0 \\
      T & 1_m \\
    \end{array}
  \right), \left(
    \begin{array}{cc}
      1_m & T \\
      0 & 1_m \\
    \end{array}
  \right), \left(
    \begin{array}{cc}
      0 & -T^{-1} \\
      T & 0 \\
    \end{array}
  \right).
$$
Note that we have
$$
\left(
    \begin{array}{cc}
      1_m & -T^{-1} \\
      0 & 1_m \\
    \end{array}
  \right)\left(\begin{array}{cc}
      1_m & 0 \\
      T & 1_m \\
    \end{array}
  \right)\left(
    \begin{array}{cc}
      1_m & -T^{-1} \\
      0 & 1_m \\
    \end{array}
  \right)= \left(
    \begin{array}{cc}
      0 & -T^{-1} \\
      T & 0 \\
    \end{array}
  \right),
$$
that is
\begin{equation}
\label{ga3}
k_{-T^{-1}}g_T k_{-T^{-1}}=\ell_T.
\end{equation}

    According to (\ref{f2}), we have
\begin{equation}
\label{v6}
W(k_{-T^{-1}})e_v=\beta(h(k_{-T^{-1}}v,v)) e_{v},\quad v\in N.
\end{equation}

We next find $P(g)$ for $g=g_T$. By the general method outlined in \S\ref{ela}, to determine $P(g)$ we need to find a nonzero $x_0$ in the space of points fixed by all $S^g(0,u)$, $u\in M$.

To this end, note first of all that
$$
S^g(0,w)=S(0,gw)=S(0,w),\quad w\in N,
$$
which implies
$$
P(g)S(0,w)P(g)^{-1}=S^g(0,w)=S(0,w),\quad w\in N.
$$
This means that $P(g)$ commutes with all operators $S(0,w)$, $w\in N$.

The restriction of $g-1_V$ to $M$ defines an isomorphism of $B$-modules, $M\to N$, whose inverse is
the restriction of $k_{T^{-1}}-1_V$ to $N$. Thus, for each $w\in N$ there is a unique element $u^w=(k_{T^{-1}}-1_V)w\in M$ such that $gu^w-u^w=w$.
We claim that the nonzero element
$$
x_0=\underset{w\in N}\sum\beta(h(u^w,w))e_w
$$
is a fixed point of all $S^g(0,u)$, $u\in M$. In order to verify the claim, we first note that
\begin{equation}
\label{ne}
\beta(h(x,gy-y))=\beta(h(y,gx-x)),\quad x,y\in V.
\end{equation}
Indeed, since $gx-x\in N$ for all $x\in V$ and $h(N,N)=0$, making use of (A6) we obtain
$$
\begin{aligned}
\beta(h(x,gy-y)) &= \beta(h(x-gx+gx,gy-y))=\beta(h(gx,gy-y))=\beta(h(x,y)-h(gx,y))\\
&=\beta(-h(y,x)+h(y,gx))=\beta(h(y,gx-x)).
\end{aligned}
$$
Now, using (A6) once more and appealing to (\ref{gzc}) we see that
\begin{equation}
\label{v1}
S^g(0,u)=S(0,gu)=S(0,gu-u)S(0,u)\beta(h(u,gu)),\quad u\in M.
\end{equation}
Thus, (\ref{gza}), (\ref{gzb}) and (\ref{v1}) yield
\begin{equation}
\label{v1t}
S^g(0,u)x_0=\underset{w\in N}\sum\beta(2h(u,w)+h(u,gu)+h(u^w,w))e_{w+gu-u},\quad u\in M.
\end{equation}
Make the change of variable $w'=w+gu-u$ and expand (\ref{v1t}) accordingly. By definition, we have $u^{w'-(gu-u)}=u^{w'}-u$.
Using (\ref{ne}) and (A6) once again we see that  (\ref{v1t}) is transformed into
$$
S^g(0,u)x_0=\underset{w'\in N}\sum\beta(h(u^{w'},w'))e_{w'}=x_0,\quad u\in M.
$$
This proves the claim.

Since $P(g)$ commutes with all $S(0,w)$, $w\in N$, (\ref{gza}) gives
$$P(g)e_v=P(g)S(0,v)e_0=S(0,v)P(g)e_0=S(0,v)x_0, \quad v\in N.$$
This and the above claim yield
$$
P(g)e_v=\underset{w\in N}\sum\beta(h(u^w,w))e_{v+w},\quad v\in N.
$$
Since $h(N,N)=0$, $h((k_{T^{-1}}-1_V)w,w)=h(k_{T^{-1}}w,w)$ for all $w\in N$, whence
$$
P(g)e_v=\underset{w\in N}\sum\beta(h(k_{T^{-1}}w,w))e_{v+w},\quad v\in N,
$$
or
\begin{equation}\label{defin}
P(g)e_v=\underset{w\in N}\sum\beta(h(k_{T^{-1}}(w-v),w-v))e_{w},\quad v\in N.
\end{equation}
This defines an intertwining operator between $S$ and $S^g$.

We next claim that
\begin{equation}\label{vg}
c(g)=\big(\underset{w\in N}\sum\beta(h( k_{T^{-1}}w,w))\big)^{-1}.
\end{equation}
Indeed, let $\widehat{N}$ be the group of all group homomorphisms $N\to\C^\times$. For each $\nu\in \widehat{N}$
set
$$
X^\nu=\{x\in X\,|\, S(0,w)=\nu(w)x\text{ for all }w\in N\}
$$
and
$$
y_\nu=\underset{w\in N}\sum \nu(w)^{-1}e_w\in X.
$$
It is well-known and easy to see that
$$\underset{\nu\in \widehat{N}}\sum X^\nu=\underset{\nu\in \widehat{N}}\bigoplus X^\nu,$$
and
$$
y_\nu\in X^{\nu}.
$$
Since $|\widehat{N}|=|N|$, it follows that each $X^\nu$ is one dimensional and spanned by $y_{\nu}$.
Now $P(g)$ commutes with all $S(0,w)$, $w\in N$, so $P(g)$ preserves each $X^\nu$, $\nu\in \widehat{N}$. This means
that each $y_\nu$ is an eigenvector for $P(g)$. Let $l_\nu^g$ be the eigenvalue for $P(g)$ acting on $y_\nu$. Note that
$$
P(-1_V)y_\nu=y_{{\nu}^{-1}},\quad \nu\in \widehat{N},
$$
We have already mentioned that $P(g)P(-1_V)=P(-1_V)P(g)$ (see \cite[\S 3]{CMS}). Therefore,
$$
P(g)y_{{\nu}^{-1}}=P(g)P(-1_V)y_\nu=P(-1_V)P(g)y_\nu=P(-1_V)l^g_\nu y_\nu=l^g_\nu y_{{\nu}^{-1}},
$$
which implies that
$$
l^g_\nu=l^g_{{\nu}^{-1}},\quad \nu\in \widehat{N}.
$$
It follows from the definition of $c(g)$ that
$$
c(g)=(l_1^g)^{-1}.
$$
Here $P(g)y_1=l_1^g y_1$, that is
$$
P(g)(\underset{w\in N}\sum e_w)=l_1^g(\underset{w\in N}\sum e_w).
$$
Using the definition of $P(g)$ this forces
$$
l_1^g=\underset{w\in N}\sum\beta(h(k_{T^{-1}} w,w)).
$$
Note that since $l_1^g$ is an eigenvalue of the invertible operator $P(g)$, the right hand side of the last equation is not 0. This proves (\ref{vg}). We note at this point that
$$
\underset{w\in N}\sum\beta(h( k_{T^{-1}} w,w))=\underset{b\in B^m}\sum \beta(b^* (-\va T^{-1}) b),
$$
so
\begin{equation}\label{vg2}
c(g_T)=\big(\underset{b\in B^m}\sum\beta(b^* (-\va T^{-1}) b)\big)^{-1}.
\end{equation}
From (\ref{defin}) and (\ref{vg2}) one gets
\begin{equation}
\label{j22}
W(g_T)e_v=\big(\underset{b\in B^m}\sum\beta(b^* (-\va T^{-1}) b)\big)^{-1}
\underset{w\in N}\sum\beta(k_{T^{-1}}(w-v),w-v))e_{w},\quad v\in N.
\end{equation}
Combining (\ref{ga3}), (\ref{v6}) and (\ref{j22}), and using (A6) as well as $T^*+\va T=0$ we obtain
\begin{equation}
\label{j23}
W(\ell_T)e_v=\big(\underset{b\in B^m}\sum\beta(b^* (-\va T^{-1}) b)\big)^{-1}\underset{w\in N}\sum\beta(-2 h(
 k_{T^{-1}}v,w))e_{w},\quad v\in N.
\end{equation}

For $T\in\GL(m,B)$, let $f_T$ be the element of $\U_{M,N}$ such that
$$
M_{\BB}(f_T)=h_T=\left(
    \begin{array}{cc}
      T & 0 \\
      0 & (T^*)^{-1} \\
    \end{array}
  \right).
$$

\section{The skew hermitian case}\label{skewsec}

In this section we assume $\va=-1$. Obviously $1_m\in\GL(m,B)$ and $1_m^*=1_m$.

We next specialize (\ref{j23}) to the important case $s=\ell_{1_m}$. For $v\in N$, we have
$$
v=v_1a_1+\cdots+v_m a_m,\quad a_i\in B,
$$
so
$$
(k_{1_m}-1_V)v=u_1a_1+\cdots+u_m a_m=-sv.
$$
Thus by (\ref{j23})
\begin{equation}
\label{j27}
W(s)e_v=\big(\underset{b\in B^m}\sum\beta(b^* b)\big)^{-1}\underset{w\in N}\sum\beta(2 h(sv,w))e_{w},\quad v\in N,
\end{equation}
which is in perfect agreement with (\ref{pu2}).

Next set
$$
\widehat{G}(\beta)=\underset{b\in B}\sum\beta(b^* b).
$$
Then, we clearly have
\begin{equation}
\label{j28}
\underset{b\in B^m}\sum\beta(b^* b)=\widehat{G}(\beta)^m.
\end{equation}
Combining (\ref{j27}) and (\ref{j28}) we obtain
\begin{equation}
\label{j29}
W(s)e_v=\widehat{G}(\beta)^{-m}\underset{w\in N}\sum\beta(2 h(sv,w))e_{w},\quad v\in N,
\end{equation}
Using (\ref{pu3}) and (\ref{pu4}) we also find
\begin{equation}
\label{mu3}
W(s)e_v=\widehat{G}(\beta)^{-m}\underset{w\in N}\sum \beta(-2 \underset{1\leq i\leq m}\sum h(u_i,v)^*h(u_i,w))e_w,\quad v\in N,
\end{equation}
\begin{equation}
\label{mu4}
W(s)e_a=\widehat{G}(\beta)^{-m}\underset{b\in B^m}\sum \beta(-2a^* b)e_b,\quad a\in B^m.
\end{equation}
In the case of $t=sf_{-1_m}$, we deduce
\begin{equation}
\label{mu5}
W(t)e_v=\widehat{G}(\beta)^{-m}(-1)^{\frac{|N|-1}{2}}\underset{w\in N}\sum \beta(2 \underset{1\leq i\leq m}\sum h(u_i,v)^*h(u_i,w))e_w,\quad v\in N
\end{equation}
and
\begin{equation}
\label{mu6}
W(t)e_a=\widehat{G}(\beta)^{-m}(-1)^{\frac{|N|-1}{2}}\underset{b\in B^m}\sum \beta(2a^* b)e_b,\quad a\in B^m.
\end{equation}

Observe next that for $T\in\GL(m,B)$ satisfying $T^*=T$, we have
$$
 \left(
    \begin{array}{cc}
      T^{-1} & 0 \\
      0 & T\\
    \end{array}
  \right) \left(
    \begin{array}{cc}
      0 & -1_m \\
      1_m & 0 \\
    \end{array}
  \right)=\left(
    \begin{array}{cc}
      0 & -T^{-1} \\
      T & 0 \\
    \end{array}
  \right),
$$
that is
\begin{equation}
\label{ka3}
f_{T^{-1}}s=\ell_T.
\end{equation}

If we now use (\ref{f1G}), (\ref{anj}), (\ref{j29}) and (\ref{ka3}) we obtain
$$
W(\ell_T)e_v=\mu(T) \widehat{G}(\beta)^{-m}\underset{w\in N}\sum\beta(2 h(sv,w))e_{f_{T^{-1}}w},\quad v\in N,
$$
$$
W(\ell_T)e_v=\mu(T) \widehat{G}(\beta)^{-m}\underset{w\in N}\sum\beta(2 h(sv,f_T w))e_{w},\quad v\in N,
$$
$$
W(\ell_T)e_v=\mu(T) \widehat{G}(\beta)^{-m}\underset{w\in N}\sum\beta(2 h(f_{T^{-1}}sv,w))e_{w},\quad v\in N,
$$
\begin{equation}
\label{mu7}
W(\ell_T)e_v=\mu(T) \widehat{G}(\beta)^{-m}\underset{w\in N}\sum\beta(-2  h(k_{T^{-1}}v,w))e_{w},\quad v\in N.
\end{equation}

Comparing (\ref{j23}) and (\ref{mu7}) we obtain
$$
\underset{b\in B^m}\sum\beta(b^* T^{-1} b)=\mu(T) \underset{b\in B^m}\sum\beta(b^*b).
$$
Using that $\mu$ takes values in $\{1,-1\}$ as well the formulas at the end of \S\ref{ela} we deduce the following result.

\begin{theorem}\label{mu1} Suppose $T\in\GL(m,B)$ satisfies $T^*=T$. Then
\begin{equation}
\label{mu82}
\underset{b\in B^m}\sum\beta(b^* T b)=\underset{b\in B^m}\sum\beta(b^* T^{-1} b)=\mu(T) \underset{b\in B^m}\sum\beta(b^*b),
\end{equation}
\begin{equation}
\label{mu9}
\big(\underset{b \in B^m}\sum \beta(b^* T b)\big)^2=(-1)^{\frac{|N|-1}{2}}|N|.
\end{equation}
\end{theorem}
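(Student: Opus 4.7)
The proof will match two independent expressions for $W(\ell_T)e_v$ that have already been derived in this section. The first, (\ref{j23}) with $\va=-1$, arrives at $W(\ell_T)$ via the conjugation identity (\ref{ga3}) and carries the correcting scalar $\big(\sum_{b\in B^m}\beta(b^* T^{-1} b)\big)^{-1}$. The second, (\ref{mu7}), arrives at $W(\ell_T)$ through the factorization $\ell_T = f_{T^{-1}} s$ from (\ref{ka3}), combined with $W(s)$ from (\ref{j29}) and the scalar contribution $\mu(T)$ coming from the action of $f_{T^{-1}} \in \U_{M,N}$; thus its leading scalar is $\mu(T)\widehat{G}(\beta)^{-m}$. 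Both expressions exhibit the same $w$-dependent sum $\sum_{w \in N} \beta(-2 h(k_{T^{-1}} v, w))\, e_w$, so the two leading scalars must coincide. Using (\ref{j28}) and $\mu(T)^2 = 1$, this forces
\[
\sum_{b\in B^m}\beta(b^* T^{-1} b) \;=\; \mu(T)\sum_{b\in B^m}\beta(b^* b).
\]

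To obtain the version with $T$ in place of $T^{-1}$, I would replace $T$ by $T^{-1}$ in the identity above, which is legitimate since $(T^{-1})^* = T^{-1}$, and then use $\mu(T^{-1}) = \mu(T)$; this equality holds because $\mu$ is a group homomorphism with image in $\{-1,1\}$. This completes (\ref{mu82}).

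For (\ref{mu9}), I would square (\ref{mu82}). Since $\mu(T)^2 = 1$, the claim reduces to
\[
\Big(\sum_{b\in B^m}\beta(b^* b)\Big)^2 \;=\; \widehat{G}(\beta)^{2m} \;=\; (-1)^{(|N|-1)/2}\, |N|.
\]
This is exactly what the end of \S\ref{ela} provides, applied to the element $z = s \in \U$, which satisfies $z^2 = -1_V$ in the present skew hermitian case: one has $c(s)^2 = (-1)^{(|N|-1)/2}/|N|$. Comparing the formula $W(s) = c(s) P(s)$ with (\ref{pu4}) and (\ref{mu4}) identifies $c(s) = \widehat{G}(\beta)^{-m}$, and squaring yields the required identity.

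The only substantive step is the initial scalar matching; after that, everything is bookkeeping. The one point worth being careful about is that (\ref{j23}) and (\ref{mu7}) really do describe the same operator $W(\ell_T)$ applied to the same basis vector $e_v$, so equality of scalars is forced by the independence of the vectors $e_w$, $w \in N$, rather than by a mere proportionality argument; this is visible by inspection of the two displayed formulas.
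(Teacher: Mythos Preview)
Your proposal is correct and follows essentially the same approach as the paper: compare the two expressions (\ref{j23}) and (\ref{mu7}) for $W(\ell_T)e_v$ to extract the scalar identity with $T^{-1}$, pass to $T$ via $\mu(T^{-1})=\mu(T)$, and square using the computation of $c(s)^2$ from the end of \S\ref{ela} together with the identification $c(s)=\widehat{G}(\beta)^{-m}$.
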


Next let $T\in\GL(m,B)$ be arbitrary. Using the identity
$$\left(
    \begin{array}{cc}
      0 & -1_m \\
      1_m & 0 \\
    \end{array}
  \right)\left(
    \begin{array}{cc}
      T & 0 \\
      0 & (T^*)^{-1} \\
    \end{array}
  \right)=\left(
    \begin{array}{cc}
       (T^*)^{-1} & 0 \\
      0 & T \\
    \end{array}
  \right)\left(
    \begin{array}{cc}
      0 & -1_m \\
      1_m & 0 \\
    \end{array}
  \right)
$$
and applying $W$ to both sides, using the given formulas for $W(s)$ and $W(h_T)$, we derive the following result.

\begin{theorem}\label{mu55} Suppose $T\in\GL(m,B)$. Then
\begin{equation}
\label{vamu}
\mu(T)=\mu(T^*).
\end{equation}
\end{theorem}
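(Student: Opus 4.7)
The plan is to apply the Weil representation $W$ to the matrix identity displayed immediately before the theorem. In group-theoretic terms, that identity reads
$$s\, f_T \;=\; f_{(T^*)^{-1}}\, s$$
in $\U$, and since $W$ is a group homomorphism we obtain the operator identity $W(s)W(f_T) = W(f_{(T^*)^{-1}})W(s)$. Evaluating each side on an arbitrary basis vector $e_a$, $a \in B^m$, should produce two expressions that differ only in a scalar, one involving $\mu(T^*)$ (coming from the formula for $W(h_T)$) and the other involving $\mu(((T^*)^{-1})^*) = \mu(T^{-1}) = \mu(T)$, where the last equality uses that $\mu$ is a $\{\pm 1\}$-valued homomorphism.

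Concretely, for the left-hand side I would use (\ref{g1G}) to write $W(f_T)e_a = \mu(T^*)e_{(T^*)^{-1}a}$ and then apply $W(s)$ via (\ref{mu4}); since $((T^*)^{-1}a)^* = a^*T^{-1}$, this yields
$$W(s)W(f_T)e_a \;=\; \mu(T^*)\,\widehat{G}(\beta)^{-m}\sum_{b \in B^m}\beta(-2a^*T^{-1}b)\,e_b.$$
For the right-hand side I would apply $W(s)$ first, then $W(f_{(T^*)^{-1}})$. Formula (\ref{g1G}) applied with $T$ replaced by $(T^*)^{-1}$ sends $e_b$ to $\mu(T^{-1})e_{Tb} = \mu(T)e_{Tb}$, and the substitution $b \mapsto T^{-1}b$ rewrites the resulting sum so that its coefficients match the left-hand side term by term.

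Comparing coefficients of $e_b$ then forces $\mu(T^*) = \mu(T)$. The only obstacle is strictly notational bookkeeping: correctly reading off the two adjoints produced by (\ref{g1G}) when it is applied to $(T^*)^{-1}$, invoking $\mu(T^{-1}) = \mu(T)$, and performing the reindexing cleanly. No fresh ingredient beyond the already established formulas (\ref{g1G}) and (\ref{mu4}) and the homomorphism property of $W$ is required.
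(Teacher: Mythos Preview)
Your proposal is correct and follows exactly the approach of the paper: apply $W$ to the identity $s\,f_T = f_{(T^*)^{-1}}\,s$, use (\ref{g1G}) and (\ref{mu4}) on each side, and compare coefficients after the obvious reindexing. The bookkeeping you outline (in particular $((T^*)^{-1})^* = T^{-1}$ and $\mu(T^{-1})=\mu(T)$) is accurate.
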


Note that (\ref{vamu}) is valid in the hermitian case as well, as this fact is independent of the form $h$.

Observe next that (\ref{vamu}) allows us to simplify (\ref{g1G}), regardless of the nature of $h$, as follows:
\begin{equation}
\label{g1G33}
W(h_T)e_a=\mu(T)e_{(T^*)^{-1}a},\quad a\in B^m, T\in \GL(m,B).
\end{equation}

In view of (\ref{g2}), (\ref{mu6}) and (\ref{g1G33}) we have the following result.

\begin{theorem}\label{weilrepskewh} The Weil representation $W:\SL^{-}_*(2,A)\to\GL(X)$, given by Theorem \ref{weilrep} through
the isomorphism $\U\to \SL^{-}_*(2,A)$, $g\mapsto M_{\BB}(g)$, is defined as follows on the Bruhat elements:
\begin{equation}
\label{fer1}
W(h_T)e_a=\mu(T)e_{(T^*)^{-1}a},\quad a\in B^m, T\in A^\times,
\end{equation}
\begin{equation}
\label{fer2}
W(u_S)e_a=\beta(a^*Sa) e_{a},\quad a\in B^m,S\in A, S^*=S,
\end{equation}
\begin{equation}
\label{fer3}
W(\omega)e_a=\widehat{G}(\beta)^{-m}(-1)^{\frac{|N|-1}{2}}\underset{b\in B^m}\sum \beta(2a^* b)e_b,\quad a\in B^m.
\end{equation}
\end{theorem}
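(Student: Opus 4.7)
My plan is to observe that Theorem \ref{weilrepskewh} is a packaging of three formulas already established in this section and in \S\ref{formula}. The only new ingredient is to identify each Bruhat element $h_T$, $u_S$, $\omega$ of $\SL^{-}_*(2,A)$ with the corresponding unitary transformation of $V$ under the isomorphism $g\mapsto M_\BB(g)$, and then to quote the matching formula.

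For (\ref{fer1}), the element $h_T$ is precisely $M_\BB(f_T)$, and (\ref{g1G33}) reads $W(h_T)e_a=\mu(T)e_{(T^*)^{-1}a}$ exactly as required. For (\ref{fer2}), the element $u_S$ with $S^*=S$ corresponds, in the skew hermitian case $\va=-1$, to an element of $\U^M$ with $S\in A^{\va\text{-sym}}$; specialising (\ref{g2}) to $\va=-1$ gives $W(u_S)e_a=\beta(-\va a^*Sa)e_a=\beta(a^*Sa)e_a$, which is (\ref{fer2}).

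The only point requiring a bit of care is (\ref{fer3}). I would verify that $\omega=\begin{pmatrix} 0 & 1_m\\ -1_m & 0\end{pmatrix}$ equals $M_\BB(t)$, where $t\in \U$ is the transformation defined in \S\ref{ela} by $u_i\mapsto \va v_i=-v_i$, $v_i\mapsto u_i$: reading off the columns of $M_\BB(t)$ block by block, one gets exactly $\begin{pmatrix} 0 & 1_m\\ \va 1_m & 0\end{pmatrix}=\omega$ when $\va=-1$. Once this identification is made, (\ref{fer3}) is just (\ref{mu6}).

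I do not anticipate a genuine obstacle here, since all the real work has already been done: the projective representation $P$ and the correcting scalar $c$ were computed in \S\ref{ela} and \S\ref{fela}, and the explicit Gauss-sum evaluations required to normalise $c(\omega)=c(t)=\widehat{G}(\beta)^{-m}(-1)^{(|N|-1)/2}$ were carried out via (\ref{pu4}), (\ref{tu4}) and (\ref{mu6}). Hence the proof reduces to the matrix identification of Bruhat elements with the geometric operators $f_T$, $k_S$ (at $S^*=S$, so that $k_S\in \U^M$) and $t$, together with the substitution $\va=-1$ in the general formulas.
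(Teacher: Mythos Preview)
Your proposal is correct and follows exactly the paper's own approach: the paper proves Theorem \ref{weilrepskewh} simply by citing (\ref{g2}), (\ref{mu6}) and (\ref{g1G33}), which is precisely what you do after identifying $h_T$, $u_S$ and $\omega$ with $f_T$, $k_S$ and $t$ respectively under $g\mapsto M_\BB(g)$. Your verification that $M_\BB(t)=\omega$ is the only detail the paper leaves implicit, and you handle it correctly.
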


\section{The hermitian case}\label{sech}

In this section we assume that $\va=1$ and $m=2n$ is even. Let
$$
Q=\left(
    \begin{array}{cc}
      0 & -1_n \\
      1_n & 0 \\
    \end{array}
  \right).
$$
Note that $Q\in\GL(m,B)$ and $Q^*=-Q=Q^{-1}$. A simple calculation, using (A6), shows that
$$
\underset{b\in B^m}\sum\beta(b^* Q b)=\underset{a,c\in B^n}\sum\beta(-2a^* c).
$$
Since $\beta$ is primitive, for $0\neq a\in B^n$, the linear character $B^n\to \C^\times$ given by $c\mapsto \beta(-2a^* c)$
is nontrivial, whence
$$
\underset{c\in B^n}\sum\beta(-2a^* c)=0
$$
and a fortiori
$$
\underset{b\in B^m}\sum\beta(b^* Q b)=\underset{c\in B^n}\sum\beta(0)=|B|^n.
$$
Therefore, by (\ref{j23})
\begin{equation}
\label{j23her}
W(\ell_Q)e_v=\frac{1}{|B|^n}\underset{w\in N}\sum\beta(-2 h(k_{Q^{-1}}v,w))e_{w},\quad v\in N.
\end{equation}
Next note that
$$
f_{-Q}\ell_Q=s.
$$
Now, since $|B|$ is odd, we see that
$$
\mu(-Q)=1.
$$
Thus, the formulas for $W(f_{-Q})$ and $W(\ell_Q)$ give
$$
\begin{aligned}
    W(s)e_v & =\frac{1}{|B|^n}\underset{w\in N}\sum\beta(-2 h(k_{Q^{-1}}v,w))e_{f_{-Q} w},\quad v\in N\\
    & =\frac{1}{|B|^n}\underset{w\in N}\sum\beta(-2 h(k_{Q^{-1}}v,f_{-Q}^{-1}  w))e_{w},\quad v\in N\\
    & =\frac{1}{|B|^n}\underset{w\in N}\sum\beta(-2 h(f_{-Q} k_{Q^{-1}}v,w))e_{w},\quad v\in N
\end{aligned}
$$
Now $-h(f_{-Q}k_{Q^{-1}}v,w) = h(sv,w)$ for all $v,w \in N$ and therefore
\begin{equation}
\label{pep}
W(s)e_v=\frac{1}{|B|^n}\underset{w\in N}\sum\beta(2 h(sv,w))e_{w},\quad v\in N.
\end{equation}
This is in agreement with (\ref{pu2}). In matrix terms, this says that
\begin{equation}
\label{ka44}
W(s)e_a=\frac{1}{|B|^n}\underset{b\in B^m}\sum\beta(2 a^* b)e_{b},\quad a\in B^m.
\end{equation}

In view of (\ref{g2}), (\ref{g1G33}) and (\ref{ka44}) we have the following result.

\begin{theorem}\label{weilrepher} The Weil representation $W:\SL^{+}_*(2,A)\to\GL(X)$, given by Theorem \ref{weilrep} through
the isomorphism $\U\to \SL^{+}_*(2,A)$, $g\mapsto M_{\BB}(g)$, is defined as follows on the Bruhat elements:
\begin{equation}
\label{ferna1}
W(h_T)e_a=\mu(T)e_{(T^*)^{-1}a},\quad a\in B^m, T\in A^\times,
\end{equation}
\begin{equation}
\label{ferna2}
W(u_S)e_a=\beta(-a^*Sa) e_{a},\quad a\in B^m,S\in A, S^*=-S,
\end{equation}
\begin{equation}
\label{ferna3}
W(\omega)e_a=\frac{1}{|B|^n}\underset{b\in B^m}\sum\beta(2 a^* b)e_{b},\quad a\in B^m.
\end{equation}
\end{theorem}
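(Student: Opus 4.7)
The three formulas decompose along the Bruhat elements, so the plan is to handle each in turn, reusing the general machinery already developed.

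For $W(h_T)$, formula (\ref{g1G33}) gives exactly the claimed expression, since the isomorphism $\U \to \SL^+_*(2,A)$ carries $f_T$ to $h_T$. For $W(u_S)$, the specialization of (\ref{g2}) to $\va = 1$ delivers the formula $W(u_S)e_a = \beta(-a^* S a) e_a$ once we note that the condition $S + \va S^* = 0$ for $\va = 1$ is exactly $S^* = -S$. Thus the only substantive task is computing $W(\omega)$.

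The key observation is that under $\U \to \SL^+_*(2,A)$, the Bruhat element $\omega$ corresponds to the unitary map $u_i \leftrightarrow v_i$, which is both $s$ and $t$ of Section \ref{ela} in the hermitian case, so it suffices to compute $W(s)$. Formula (\ref{j23}) gives $W(\ell_T)$ in closed form for any $T \in \GL(m,B)$ satisfying $T^* + T = 0$, so I plan to pick such a $T$ conveniently, evaluate the normalization sum appearing in (\ref{j23}), and then relate $\ell_T$ to $s$ via a torus factor. A natural choice is $Q = \begin{pmatrix} 0 & -1_n \\ 1_n & 0 \end{pmatrix}$, which satisfies $Q^* = -Q = Q^{-1}$ and exists precisely because $m = 2n$ is even. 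The required normalization $\sum_{b \in B^m} \beta(b^* Q b)$ expands as $\sum_{a,c \in B^n} \beta(-2 a^* c)$; by primitivity of $\beta$ (hypothesis (A4)) the inner sum over $c$ vanishes unless $a = 0$, so the total equals $|B|^n$.

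The last step uses the matrix identity $f_{-Q} \ell_Q = s$ together with $\mu(-Q) = 1$ (which follows from $|B|$ being odd). Applying (\ref{g1G33}) to transport the closed form for $W(\ell_Q)$ through $W(f_{-Q})$, then changing variables $w \mapsto f_{-Q}^{-1} w$ and invoking the identity $-h(f_{-Q} k_{Q^{-1}} v, w) = h(sv, w)$, yields $W(s)e_v = |B|^{-n} \sum_{w \in N} \beta(2 h(sv,w)) e_w$; passing to the coordinates $a, b \in B^m$ of $v, w$ gives the claimed formula for $W(\omega)$. I expect the main obstacle to be the sign and conjugation bookkeeping in this final conversion; everything else reduces to the orthogonality computation in $B^n$ and invocation of earlier formulas.
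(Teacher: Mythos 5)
Your proposal is correct and follows essentially the same route as the paper: formulas (\ref{g1G33}) and (\ref{g2}) dispose of $W(h_T)$ and $W(u_S)$, and $W(\omega)$ is obtained exactly as in \S\ref{sech} by choosing $Q=\begin{pmatrix}0&-1_n\\1_n&0\end{pmatrix}$, evaluating $\sum_{b}\beta(b^*Qb)=|B|^n$ via orthogonality, and transporting (\ref{j23}) through $f_{-Q}\ell_Q=s$ with $\mu(-Q)=1$. Your observation that $s=t$ corresponds to $\omega$ when $\va=1$ is also the (implicit) identification the paper uses.
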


Suppose next that $T\in\GL(m,B)$ satisfies $T^*=-T$. Then
$$
 \left(
    \begin{array}{cc}
      -T^{-1} & 0 \\
      0 & T\\
    \end{array}
  \right) \left(
    \begin{array}{cc}
      0 & 1_m \\
      1_m & 0 \\
    \end{array}
  \right)=\left(
    \begin{array}{cc}
      0 & -T^{-1} \\
      T & 0 \\
    \end{array}
  \right),
$$
that is
\begin{equation}
\label{ka33}
f_{-T^{-1}}s=\ell_T.
\end{equation}

If we now use  (\ref{f1G}), (\ref{anj}), (\ref{pep}) and (\ref{ka33}) we obtain
$$
W(\ell_T)e_v=\mu(T) \frac{1}{|B|^n}\underset{w\in N}\sum\beta(2 h(sv,w))e_{f_{-T^{-1}}w},\quad v\in N,
$$
$$
W(\ell_T)e_v=\mu(T) \frac{1}{|B|^n}\underset{w\in N}\sum\beta(2 h(sv,f_{-T} w))e_{w},\quad v\in N,
$$
$$
W(\ell_T)e_v=\mu(T) \frac{1}{|B|^n}\underset{w\in N}\sum\beta(2 h(f_{-T^{-1}}sv,w))e_{w},\quad v\in N,
$$
\begin{equation}
\label{mu77}
W(\ell_T)e_v=\mu(T) \frac{1}{|B|^n}\underset{w\in N}\sum\beta(-2 h(k_{T^{-1}}v,w))e_{w},\quad v\in N.
\end{equation}

Comparing (\ref{j23}) and (\ref{mu77}) one gets
$$
\underset{b\in B^m}\sum\beta(b^* (-T^{-1}) b)=\mu(T) |B|^n.
$$
But $\mu(T)=\mu(T^{-1})$ and $\mu(-1_m)=1$ since $|N|$ is square. This gives the following result.
\begin{theorem}\label{mu2} Suppose that $T\in\GL(m,B)$ satisfies $T^*=-T$. Then
\begin{equation}
\label{mu89}
\underset{b\in B^m}\sum\beta(b^* T b)=\mu(-T^{-1}) |B|^n=\mu(T) |B|^n.
\end{equation}
\end{theorem}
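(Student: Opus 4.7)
The plan is to extract the identity by matching the two expressions for $W(\ell_T)e_v$ established above: formula (\ref{j23}) specialised to $\va=1$, and formula (\ref{mu77}). Both write $W(\ell_T)e_v$ as a scalar multiple of $\sum_{w\in N}\beta(-2h(k_{T^{-1}}v,w))e_w$, so matching the scalar prefactors (e.g.\ reading off any coefficient of $e_w$ after evaluating at $v=0$, where the inner factor becomes $\beta(0)=1$) produces
$$
\Bigl(\sum_{b\in B^m}\beta(b^*(-T^{-1})b)\Bigr)^{-1} = \mu(T)\,|B|^{-n}.
$$
Inverting and using $\mu(T)\in\{\pm 1\}$ yields $\sum_{b\in B^m}\beta(b^*(-T^{-1})b) = \mu(T)\,|B|^n$.

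Next I would substitute $T$ by $-T^{-1}$ to put this in the form stated. The substitution is admissible: if $T^*=-T$ then $(-T^{-1})^* = -(T^*)^{-1} = -(-T)^{-1} = T^{-1} = -(-T^{-1})$, so $-T^{-1}$ again lies in the skew part of $\GL(m,B)$; moreover $-(-T^{-1})^{-1}=T$. The identity becomes
$$
\sum_{b\in B^m}\beta(b^*Tb) = \mu(-T^{-1})\,|B|^n,
$$
which is the first of the two asserted equalities.

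For the second equality $\mu(-T^{-1})=\mu(T)$, I would invoke that $\mu$ is a group homomorphism $\GL(m,B)\to\{\pm 1\}$ by \S\ref{formula}, so $\mu(-T^{-1}) = \mu(-1_m)\mu(T^{-1}) = \mu(-1_m)\mu(T)$. It remains to verify $\mu(-1_m)=1$, and this is the only step that requires more than a pure formality. By (\ref{anj}) and the fact that $-1_m$ sends every $v\in I$ into $-I$, one has $\mu(-1_m) = (-1)^{|I|}=(-1)^{(|N|-1)/2}$. In the hermitian setting $m=2n$ is even and $|B|$ is odd by (A2), so $|N|=|B|^{2n}=(|B|^n)^2$ and $|N|-1=(|B|^n-1)(|B|^n+1)$ is a product of two even integers, hence divisible by $4$; thus $(|N|-1)/2$ is even and $\mu(-1_m)=1$. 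The evenness of $m$ is essential here, and this parity computation is really the only substantive point in the argument: it is exactly what distinguishes the present hermitian situation from the odd-$m$ ramified case excluded in \S\ref{sech}.
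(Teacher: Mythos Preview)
Your proof is correct and follows essentially the same route as the paper: both compare (\ref{j23}) with (\ref{mu77}) to obtain $\sum_{b}\beta(b^*(-T^{-1})b)=\mu(T)|B|^n$, then use $\mu(-1_m)=1$ (because $|N|=|B|^{2n}$ is a perfect square) together with $\mu(T^{-1})=\mu(T)$ to pass to the stated form. Your detailed parity computation for $\mu(-1_m)$ simply spells out what the paper compresses into the phrase ``since $|N|$ is square.''
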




\section{Comparison}\label{comop}

Given a ring $A$ endowed with an involution $*$, \cite{GPS} provides  an abstract procedure to construct a Weil representation of $\SL_*^{\va}(2, A)$  when this group has a Bruhat presentation. For our purposes, we require the following adaptation of this method
(besides the required changes from the right to left $A$-module point of view, we have changed { (\ref{gamma3})} and, accordingly, (\ref{c})).

A data for $(A,\va)$ is a 5-tuple  $(P,\chi,\gamma, \alpha,f)$
where
\begin{enumerate}
    \item[(D1)]\label{JC1}  $P$ is a finite left $A$-module, $\chi:P\times P\rightarrow\mathbb{C}^{\times}$ is a function that is additive in each variable
and $\gamma:A^{\varepsilon\text{-sym}}\times P\rightarrow\mathbb{C}^{\times}$ is a function,
\item[(D2)] $\alpha:A^{\times}\to\C^\times$ is a group homomorphism and $f\in\C^\times$.
\end{enumerate}

Furthermore we require that \( \chi \), \( \gamma \), \( \alpha \) and \( f \) satisfy the following:
\begin{equation}\label{chi1}
\alpha(t^{\ast}t) \chi(tx, y) = \chi(x, t^\ast y)\quad x,y\in P, \quad t \in A^{\times},
\end{equation}
\begin{equation}\label{chi2}
 \chi(y,x)=[\chi(x,y)]^{-\va}, \quad x,y\in P,
\end{equation}
\begin{equation}\label{chi3}
\text{ if }\chi(x,y)=1\text{ for all } x\in P \text{ then  } y=0,
\end{equation}
\begin{equation} \label{gamma1}
 \gamma(b+b^{\prime},x)=\gamma(b,x)\gamma(b^{\prime},x),\quad x\in P, \quad b, b'\in A^{\varepsilon\text{-sym}},
 \end{equation}
\begin{equation} \label{gamma2}
\gamma(b,tx)=\gamma(t^{\ast}bt,x),\quad  x\in P, \quad b\in A^{\varepsilon\text{-sym } },\quad  t\in A^{\times},
\end{equation}
\begin{equation}\label{gamma3}
\it\gamma(t,x+z)=\gamma(t,x)\gamma(t,z)\chi(tz,x),\quad x,z\in P, \quad t\in A^{\varepsilon\text{-sym}}\cap A^\times,
\end{equation}
\begin{equation}\label{cez}
f^{2}\left\vert P\right\vert =\alpha(\varepsilon),
\end{equation}
\begin{equation} \label{c}
f\gamma(-\varepsilon t,x)\underset{y\in P}{\sum}\chi(x,y)\gamma(t^{-1},y)=\alpha(-t),\quad x\in P, \quad t\in A^{\varepsilon\text{-sym}}\cap A^\times.
\end{equation}

\begin{lemma}\label{formi} Let $(P,\chi,\gamma)$ be a triple satisfying (D1),  (\ref{chi2}), (\ref{gamma2}) and (\ref{gamma3}). Then
for all $x\in P$ and all $t\in A^{\varepsilon\text{-sym}}\cap A^\times$, we have
$$
\gamma(-\varepsilon t,x)\underset{y\in P}{\sum}\chi( x,y)\gamma(t^{-1},y)=\underset{y\in P}\sum\gamma(t^*,y)=\underset{y\in P}\sum\gamma(t^{-1},y).
$$
\end{lemma}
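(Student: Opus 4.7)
The plan is to treat the two equalities separately, starting with the easier one.

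For the second equality $\sum_{y \in P} \gamma(t^*, y) = \sum_{y \in P} \gamma(t^{-1}, y)$, I would apply (\ref{gamma2}) with $b = t^{-1}$ and the element $t \in A^\times$:
$$\gamma(t^{-1}, tx) = \gamma(t^* t^{-1} t, x) = \gamma(t^*, x).$$
Since multiplication by $t$ is a bijection on $P$, summing over $x$ and reindexing $y = tx$ yields the identity.

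For the first equality, the key manoeuvre is the substitution $y \mapsto y - tx$ in $\sum_{y \in P} \chi(x, y) \gamma(t^{-1}, y)$. By additivity of $\chi$ (from (D1)), $\chi(x, y - tx) = \chi(x, y) \chi(x, tx)^{-1}$. To rewrite $\gamma(t^{-1}, y - tx)$, I note that $t \in A^{\varepsilon\text{-sym}} \cap A^\times$ forces $t^{-1}$ into the same set (since $t^* = -\varepsilon t$ implies $(t^{-1})^* = -\varepsilon t^{-1}$), so (\ref{gamma3}) applies with $t$ there replaced by $t^{-1}$, giving
$$\gamma(t^{-1}, y - tx) = \gamma(t^{-1}, y) \gamma(t^{-1}, -tx) \chi(-x, y) = \gamma(t^{-1}, y) \gamma(t^{-1}, -tx) \chi(x, y)^{-1}.$$
The factors $\chi(x, y)$ and $\chi(x, y)^{-1}$ cancel inside the sum, leaving
$$\sum_{y \in P} \chi(x,y)\gamma(t^{-1}, y) = \chi(x, tx)^{-1} \gamma(t^{-1}, -tx) \sum_{y \in P} \gamma(t^{-1}, y).$$
The first equality thus reduces to identifying the prefactor $\chi(x, tx)^{-1} \gamma(t^{-1}, -tx)$ with $\gamma(-\varepsilon t, x)^{-1}$.

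The factor $\gamma(t^{-1}, -tx) = \gamma((-t)^* t^{-1} (-t), x) = \gamma(t^*, x) = \gamma(-\varepsilon t, x)$ is immediate from (\ref{gamma2}) applied with $b = t^{-1}$ and $-t \in A^\times$. The main obstacle is then the identity $\chi(x, tx) = \gamma(t^*, x)^2$. To establish it, I would evaluate (\ref{gamma3}) for $t^{-1}$ on the trivial decomposition $0 = tx + (-tx)$:
$$\gamma(t^{-1}, 0) = \gamma(t^{-1}, tx) \gamma(t^{-1}, -tx) \chi(-x, tx).$$
Setting $x = z = 0$ in (\ref{gamma3}) forces $\gamma(t^{-1}, 0) = 1$, while two applications of (\ref{gamma2}) give $\gamma(t^{-1}, tx) = \gamma(t^{-1}, -tx) = \gamma(t^*, x)$. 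Combined with $\chi(-x, tx) = \chi(x, tx)^{-1}$, this produces $\chi(x, tx) = \gamma(t^*, x)^2$. The prefactor is then $\gamma(t^*, x)^{-2} \cdot \gamma(t^*, x) = \gamma(t^*, x)^{-1} = \gamma(-\varepsilon t, x)^{-1}$, and the first equality follows by multiplying both sides through by $\gamma(-\varepsilon t, x)$.
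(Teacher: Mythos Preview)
Your argument is correct and uses the same ingredients as the paper's proof---the rewriting $\gamma(-\varepsilon t,x)=\gamma(t^{-1},\pm tx)$ via (\ref{gamma2}), an application of (\ref{gamma3}) with $t^{-1}$, and a shift of the summation variable by $tx$---but your organization makes you work harder than necessary. The paper multiplies the prefactor $\gamma(-\varepsilon t,x)=\gamma(t^{-1},tx)$ \emph{into} the sum first and then applies (\ref{gamma3}) in the forward direction to produce $\gamma(t^{-1},y+tx)$; the $\chi$-factor generated by (\ref{gamma3}) is exactly $\chi(x,y)$, which cancels the one already there, and a reindex $y\mapsto y-tx$ finishes. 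You instead reindex first and then expand $\gamma(t^{-1},y-tx)$, which leaves a residual factor $\chi(x,tx)^{-1}$ and forces you to prove the auxiliary identity $\chi(x,tx)=\gamma(t^*,x)^2$ by a separate evaluation of (\ref{gamma3}) at $0=tx+(-tx)$. That identity is true and your derivation of it is fine, but it is an avoidable detour: had you shifted by $+tx$ and folded the prefactor in before expanding, the stray $\chi(x,tx)$ never appears.
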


\begin{proof} Note first of all that (\ref{gamma2}) gives
\begin{equation}
\label{yt}
\gamma(t^{-1},tx)=\gamma(t^*t^{-1}t,x)=\gamma(-\va t,x).
\end{equation}
Therefore by (\ref{gamma3}) and (\ref{yt}), we have
$$
\begin{aligned}
\gamma(-\varepsilon t,x)\underset{y\in P}{\sum}{\chi( x,y)\gamma(t^{-1},y)} &= \underset{y\in P}{\sum}\chi( x,y)\gamma(t^{-1},y)\gamma(t^{-1},tx)\\
&=\underset{y\in P}{\sum}{\chi(x,y)\gamma(t^{-1},y+tx)\chi(x,y)^{-1}}\\
&=\underset{y\in P}{\sum}\gamma(t^{-1},y+tx)=\underset{y\in P}{\sum}\gamma(t^{-1},y)\\
&=\underset{y\in P}{\sum}\gamma(t^{-1},t(x+t^{-1}y))=\underset{y\in P}{\sum}\gamma(-\va t,x+t^{-1}y)\\
&=\underset{y\in P}{\sum}\gamma(t^*,y).
\end{aligned}
$$
\end{proof}

\begin{cor}\label{coro} Let $(P,\chi,\gamma, \alpha,f)$ be a data for $(A,\va)$. Then
$$
\alpha(t^*)=\alpha(t^{-1}),\quad t\in A^{\varepsilon\text{-sym}}\cap A^\times.
$$
Alternatively,
$$
\alpha(t^2)=\alpha(-\va),\quad t\in A^{\varepsilon\text{-sym}}\cap A^\times.
$$
\end{cor}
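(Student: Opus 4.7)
My plan is to combine the hypothesis (\ref{c}) with Lemma \ref{formi} to produce a clean scalar identity, then to exploit the fact that $A^{\varepsilon\text{-sym}}\cap A^\times$ is closed under the operations $t\mapsto t^*$ and $t\mapsto t^{-1}$ (indeed $t^*=-\va t$ for $t\in A^{\varepsilon\text{-sym}}$, so $t^*$ is just a scalar multiple of $t$, and being a unit is preserved by both operations).

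First I would fix $t\in A^{\varepsilon\text{-sym}}\cap A^\times$ and any $x\in P$. Lemma \ref{formi} rewrites the left-hand side of (\ref{c}) as $f\sum_{y\in P}\gamma(t^{-1},y)$, which is independent of $x$. Combining with (\ref{c}) gives the key identity
\begin{equation}\label{keyco}
f\sum_{y\in P}\gamma(t^{-1},y)=\alpha(-t),\qquad t\in A^{\varepsilon\text{-sym}}\cap A^\times.
\end{equation}

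Next, I would apply (\ref{keyco}) to $t^*$ in place of $t$, which is legitimate since $t^*=-\va t$ lies in $A^{\varepsilon\text{-sym}}\cap A^\times$; this yields $f\sum_{y}\gamma((t^*)^{-1},y)=\alpha(-t^*)$. Applying (\ref{keyco}) instead to $t^{-1}$ in place of $t$ gives $f\sum_y\gamma(t,y)=\alpha(-t^{-1})$. The final ingredient is Lemma \ref{formi} used once more, this time with $t^{-1}$ in the role of $t$, which gives $\sum_y\gamma((t^{-1})^*,y)=\sum_y\gamma(t,y)$, i.e.\ $\sum_y\gamma((t^*)^{-1},y)=\sum_y\gamma(t,y)$. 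Putting these three equations together produces $\alpha(-t^*)=\alpha(-t^{-1})$, and dividing by $\alpha(-1)$ (which is defined since $-1\in A^\times$) gives $\alpha(t^*)=\alpha(t^{-1})$.

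For the ``alternatively'' clause, I would use $t^*=-\va t$ together with the fact that $\alpha$ is a group homomorphism: the identity $\alpha(t^*)=\alpha(t^{-1})$ becomes $\alpha(-\va)\alpha(t)=\alpha(t)^{-1}$, whence $\alpha(t)^2=\alpha(-\va)^{-1}=\alpha(-\va)$ (using that $(-\va)^2=1$ forces $\alpha(-\va)^2=1$). This is precisely $\alpha(t^2)=\alpha(-\va)$. There is no real obstacle here; the only point requiring care is verifying that all of $t$, $t^*$ and $t^{-1}$ remain in $A^{\varepsilon\text{-sym}}\cap A^\times$ so that each invocation of (\ref{keyco}) and Lemma \ref{formi} is licit, which follows immediately from $t^*=-\va t$.
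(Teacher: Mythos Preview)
Your proof is correct and follows essentially the same route as the paper's: combine (\ref{c}) with Lemma~\ref{formi} to obtain a scalar identity of the form $f\sum_{y}\gamma(s,y)=\alpha(\cdots)$, apply it at two different parameter values, and then use the equality $\sum_y\gamma(t^*,y)=\sum_y\gamma(t^{-1},y)$ from Lemma~\ref{formi} to equate the right-hand sides. The paper organizes the substitutions slightly differently (it writes the identity as $\sum_y\gamma(t^*,y)=\alpha(\va t^*)/f$ after noting $-t=\va t^*$, then substitutes $t^{-1}$ for $t^*$), but the logic is the same, and your derivation of the ``alternatively'' clause is exactly what is intended.
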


\begin{proof} Let  $t\in A^{\varepsilon\text{-sym}}\cap A^\times$. Then by (\ref{c}) and  Lemma \ref{formi}
$$
\underset{y\in P}\sum\gamma(t^*,y)=\alpha(-t)=\alpha(\va t^*)/f,
$$
and a fortiori
$$
\underset{y\in P}\sum\gamma(t^{-1},y)=\alpha(\va t^{-1})/f.
$$
Now the conclusion follows from Lemma (\ref{formi}) and the fact that $\alpha$
is a homomorphism.
\end{proof}

\begin{lemma}\label{gae} Let $(P,\gamma)$ be a pair satisfying (\ref{gamma1}) and (\ref{gamma2}). Then
for all $x\in P$ and all $t\in A^{\varepsilon\text{-sym}}$, we have
$$
\gamma(t,-x)=\gamma(t,x)\text{ and }\gamma(t,x)^{-1}=\gamma(-t,x).
$$
\end{lemma}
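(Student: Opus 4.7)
The plan is to deduce both identities directly from the two stated axioms, with no further input needed.

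First I would establish the auxiliary fact that $\gamma(0, x) = 1$ for every $x \in P$. Applying (\ref{gamma1}) with $b = b' = 0$ gives $\gamma(0, x) = \gamma(0, x)^2$, and since $\gamma$ takes values in $\C^\times$, this forces $\gamma(0, x) = 1$.

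Next I would prove the second identity $\gamma(t, x)^{-1} = \gamma(-t, x)$. Since $t \in A^{\varepsilon\text{-sym}}$ implies $-t \in A^{\varepsilon\text{-sym}}$, I can set $b = t$ and $b' = -t$ in (\ref{gamma1}) to obtain
\begin{equation*}
1 = \gamma(0, x) = \gamma(t + (-t), x) = \gamma(t, x) \gamma(-t, x),
\end{equation*}
which is the desired relation.

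For the first identity $\gamma(t, -x) = \gamma(t, x)$, I would apply (\ref{gamma2}) with the unit $-1 \in A^\times$, noting that $(-1)^* = -1$ and that $-1$ is central in $A$, so
\begin{equation*}
\gamma(t, -x) = \gamma((-1)^* t (-1), x) = \gamma(t, x).
\end{equation*}
I do not anticipate any real obstacle here; the only subtle point to flag is that both steps rely on $-1$ being a unit (trivially) and on interpreting $-t$ as $(-1)\cdot t$, which lies in $A^{\varepsilon\text{-sym}}$ because that set is closed under additive inverses.
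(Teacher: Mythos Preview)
Your proof is correct and follows exactly the paper's approach: the paper states that the first equation follows from (\ref{gamma2}) by taking the unit equal to $-1$, and the second is an immediate consequence of (\ref{gamma1}); you have simply made these steps explicit, including the intermediate observation that $\gamma(0,x)=1$.
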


\begin{proof} The first equation follows from (\ref{gamma2}) by taking $t=-1$, while the second is an immediate consequence of
(\ref{gamma1}).
\end{proof}

\begin{theorem}\label{data} (cf. \cite[Theorem 4.3]{GPS}) Let $(P,\chi,\gamma, \alpha,f)$
    be a data for $(A,\va)$ and let $X$ be complex vector space with basis $(e_v)_{v\in P}$. Let $R$ be the function defined on the Bruhat
elements, with values in $\GL(X)$, as follows:
\begin{equation}\label{luis1}
R(h_{t})e_{x}=\alpha(t)e_{(t^*)^{-1}x},\quad  t\in A^{\times},  x\in P,
\end{equation}

\begin{equation}\label{luis2}
R(u_{b})(e_{x})=\gamma(b,x)e_{x},\quad  b\in A^{\varepsilon\text{-sym }},  x\in P,
\end{equation}

\begin{equation}\label{luis3}
R(\omega)(e_{x})=f\displaystyle{\sum_{y\in P}{\chi(x,y)}e_{y}}, \quad  x,y\in P.
\end{equation}
Then $R$ preserves the relations (R1)-(R6) given in \S\ref{uno}. Thus, if $\SL_*^{\va}(2,A)$ is generated by the Bruhat elements with defining relations (R1)-(R6), then $R$ extends in one and only one way to a representation $\SL_*^{\va}(2,A)\to\GL(X)$.
\end{theorem}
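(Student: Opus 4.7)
The plan is to verify each of the six relations (R1)--(R6) separately, by applying $R$ of each side to a basis vector $e_x$ and matching the scalar and index-shift factors. The easy cases are (R1), immediate from $\alpha$ being a group homomorphism together with $((st)^*)^{-1} = (s^*)^{-1}(t^*)^{-1}$; (R2), which reduces to additivity of $\gamma$ in the first slot, i.e.\ (\ref{gamma1}); and (R4), where $R(h_t u_r) e_x$ and $R(u_{trt^*} h_t) e_x$ have matching $\gamma$-factors by (\ref{gamma2}) applied with $t$ replaced by $(t^*)^{-1}$.

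For (R3), expand $R(\omega)^2 e_x = f^2 \sum_{y,z} \chi(x,y)\chi(y,z) e_z$, use (\ref{chi2}) to rewrite $\chi(y,z)$, and combine via additivity in the first variable of $\chi$ to obtain $\sum_y \chi(x - \va z, y)$. By (\ref{chi3}) the map $y \mapsto \chi(u,y)$ is a nontrivial character of the abelian group $P$ whenever $u \neq 0$, so this sum equals $|P|\,\delta_{u,0}$; only $z = \va x$ survives, and (\ref{cez}) supplies the factor $\alpha(\va)$ needed to match $R(h_\va) e_x$. For (R5), the identity to verify reduces to $\alpha(t)\chi((t^*)^{-1}x, y) = \alpha(t^*)^{-1} \chi(x, t^{-1}y)$; applying (\ref{chi1}) with $t$ replaced by $(t^*)^{-1}$ relates the two $\chi$-values by a factor of $\alpha(t^*t)$, and the remaining $\alpha$-bookkeeping is controlled by Corollary \ref{coro} together with the commutativity of $\C^\times$.

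The main obstacle is (R6). Applied to $e_x$, the left-hand side becomes
\[
  f^{2}\,\gamma(t,x) \sum_{z} \gamma(t,z)\Big[\sum_{y} \chi(x,y)\chi(y,z)\,\gamma(-\va t^{-1},y)\Big] e_z.
\]
Setting $w = x - \va z$, (\ref{chi2}) and additivity give $\chi(x,y)\chi(y,z) = \chi(w,y)$; then axiom (\ref{c}), applied with $t$ replaced by $-\va t$, evaluates the inner sum to $f^{-1}\alpha(\va t)\gamma(t,w)^{-1}$. The quadratic law (\ref{gamma3}) together with (\ref{gamma2}) unfolds $\gamma(t,w) = \gamma(t, x + (-\va)z)$ as $\gamma(t,x)\gamma(t,z)\chi(-\va tz, x)$, so the outer $\gamma$ factors cancel against $\gamma(t,w)^{-1}$ up to a single $\chi$-factor. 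What remains must be matched against
\[
  R(\omega h_{-t^{-1}}) e_x = f\,\alpha(-t^{-1}) \sum_{z} \chi(\va tx, z)\, e_z,
\]
which follows from one final application of (\ref{chi1}) with $t$ replaced by $\va t$ (using that $(\va t)^* = -t$ because $t \in A^{\va\text{-sym}}$), together with Corollary \ref{coro}, which controls the leftover powers of $\alpha(t)$ via $\alpha(t^2) = \alpha(-\va)$. Once the six relations are verified, the last assertion of the theorem is the universal property of a presentation.
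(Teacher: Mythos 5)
Your proposal is correct and follows essentially the same route as the paper: a relation-by-relation check on the basis $(e_v)_{v\in P}$, with (R1), (R2), (R4) read off from (D2), (\ref{gamma1}), (\ref{gamma2}), (R3) reduced to orthogonality of the characters $y\mapsto\chi(x',y)$ plus (\ref{cez}), (R5) to (\ref{chi1}), and (R6) to axiom (\ref{c}) combined with (\ref{gamma3}), Lemma \ref{gae} and Corollary \ref{coro} (the paper first replaces $t$ by $-\va t$ and rearranges (R6), but this is an immaterial difference from your direct verification). One small caveat: in (R5) the parameter $t$ ranges over all of $A^{\times}$, so Corollary \ref{coro}, whose hypothesis is $t\in A^{\varepsilon\text{-sym}}\cap A^\times$, is not applicable there; the residual identity actually needed is $\alpha(t)^2\alpha(t^*)^2=1$, which the paper's own verification of (R5) likewise leaves implicit (it holds in the intended application, where $\alpha=\mu$ is $\{\pm 1\}$-valued and $\mu(t)=\mu(t^*)$).
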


\begin{proof} A direct application of (D2) (resp. (\ref{gamma1})) shows that $R$ preserves (R1) (resp. (R2)). As for (R3), the definition (\ref{luis3}) yields
\begin{equation}\label{luis25}
R(\omega)R(\omega)e_x=f^2\underset{z\in P}\sum
(\underset{y\in P}\sum   \chi(x-\va z,y))e_z.
\end{equation}
By  (D1), (\ref{chi2}) and (\ref{chi3}), the map $y\mapsto \chi(x',y)$ is a nontrivial  linear character of $P$ whenever $x'\neq 0$. Thus
$$
\underset{y\in P}\sum   \chi(x-\va z,y)=\begin{cases} 0 &\text{ if } z\neq \va x,\\\left\vert P\right\vert & \text{ if } z=\va x. \end{cases}
$$
Therefore (\ref{cez}) and (\ref{luis25}) give
$$
R(\omega)R(\omega)e_x=f^2\left\vert P\right\vert e_{\va x}=\alpha(\va)e_{\va x}=R(h_{\va})e_x.
$$
Regarding (R4), let $t\in A^{\times}$ and $r\in A^{\varepsilon\text{-sym}}$.  Then by (\ref{gamma2}) we have
$$
R(u_{trt^*})R(h_t)e_x=\alpha(t)\gamma(trt^*,(t^*)^{-1}x)e_{(t^*)^{-1}x}=\alpha(t)\gamma(r,x)e_{(t^*)^{-1}x}=R(h_t)R(u_r)e_x.
$$
In regards to (R5), making use of (D2) and (\ref{chi1}) we see that
$$
R(\omega)R(h_t) e_x=f\alpha(t)\displaystyle{\sum_{y\in P}\chi((t^*)^{-1}x,y)e_{y}}=f\alpha(t^*)^{-1}
\displaystyle{\sum_{y\in P}\chi(x,t^{-1}y)e_{y}}.
$$
On the other hand, we have
$$
R(h_{(t^*)^{-1}})R(\omega)e_x=f\alpha(t^*)^{-1}\displaystyle{\sum_{y\in P}\chi(x,y)e_{ty}}=f\alpha(t^*)^{-1}\displaystyle{\sum_{y\in P}\chi(x,t^{-1}y)e_{y}}.
$$
These two expressions are identical.

Next let  $t\in A^{\varepsilon\text{-sym}}\cap A^\times$. We wish to verify that $R$ preserves (R6), or the equivalent relation obtained by replacing $t$ by $-\va t$, namely
$$u_{-\va t} \omega u_{t^{-1}}\omega = \omega h_{\va t^{-1}}u_{\va t}.$$
Let $x\in P$. Applying (\ref{luis2}) and (\ref{luis3}) and making use of (\ref{chi2}) we obtain
\begin{equation}\label{lado}
R(u_{-\va t})R(\omega)R(u_{t^{-1}})R(\omega)e_x=f^2\underset{z\in P}\sum \gamma(-\va t,z)\big(\underset{y\in P}
\sum \chi(x-\va z,y)\gamma(t^{-1},y)\big)e_z.
\end{equation}
In view of (D1), (\ref{gamma3}) and Lemma \ref{gae}, we have
$$
\gamma(-\va t,x-\va z)=\gamma(-\va t,x)\gamma(-\va t,z)\chi(tx,z)
$$
Appealing to (D2) and Lemma \ref{gae}, we can translate this as follows:
\begin{equation}\label{lado1}
\gamma(-\va t,z)=\gamma(\va t,x)\chi(-tx,z)\gamma(-\va t,x-\va z).
\end{equation}
Substituting (\ref{lado1}) in (\ref{lado}) and making use of (\ref{c}) and Lemma \ref{gae} we derive
\begin{equation}\label{lado2}
R(u_{-\va t})R(\omega)R(u_{t^{-1}})R(\omega)e_x=f\alpha(-t)\gamma(\va t,x)\underset{z\in P}\sum \chi(-tx,z)e_z.
\end{equation}

On the other hand, applying (\ref{luis1})-(\ref{luis3}), we see that
\begin{equation}\label{lado3}
R(\omega)R(h_{\va t^{-1}})R(u_{\va t})e_x=f\alpha(\va t^{-1})\gamma(\va t,x)\underset{z\in P}\sum  \chi(-tx,z)e_z.
\end{equation}
By Corollary \ref{coro}, $\alpha(-t)=\alpha(\va t^{-1})$. Therefore, (\ref{lado2}) and (\ref{lado3})
are identical.
\end{proof}

\begin{theorem}\label{main} Let $B$ be a finite ring where $2\in B^\times$ having an involution $*$ and a primitive linear character $\beta:R^+\to\C^\times$
satisfying $\beta(b+\va b^*)=1$ for all $b\in B$, and extend $*$ to an involution, also denoted by $*$, of $A=M(m,B)$. Moreover, suppose $m$ is even if $\va=1$. Let $P$ be the column space $B^m$ and
let $X$ be complex vector space with basis $(e_v)_{v\in P}$. Set
\begin{itemize}
\item  $\chi(a,b)=\beta(2a^*b)$,  for all $a,b\in P$,
\item $\gamma(S,a)=\beta(-\va a^*Sa)$, for all $a\in P$, and all $S\in A$ satisfying $S^*+\va S=0$,
\item $\alpha=\mu$,
\item $f=\widehat{G}(\beta)^{-m}(-1)^{\frac{|N|-1}{2}}$ if $\va=-1$, while $f=1/|B|^n$ if $\va=1$.
\end{itemize}
Then  $(P,\chi,\gamma, \alpha,f)$  is a data for $(A,\va)$. Moreover, associated to this data, there exists one and only one representation
$W:\SSL_*^{\va}(2, A)\to\GL(X)$ satisfying (\ref{luis1})-(\ref{luis3}),
namely the one satisfying (\ref{fer1})-(\ref{fer3}) in the skew hermitian case and (\ref{ferna1})-(\ref{ferna3}) in the hermitian case. In other words, the Weil representations
of $\SSL_*^{\va}(2, A)$ obtained via abstract data and through Heisenberg groups are identical.

If $B$ is local we can replace the two instances of $\SSL_*^{\va}(2, A)$ above by $\SL_*^{\va}(2, A)$, except only when $\va=1$ and $*$ is ramified, in which case
$[\SL_*^{+}(2, A):\SSL_*^{+}(2, A)]=2$.
\end{theorem}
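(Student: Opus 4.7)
The plan has two main parts: verify that $(P,\chi,\gamma,\alpha,f)$ is a data for $(A,\va)$, and then compare the resulting formulas on the Bruhat elements with those given by Theorems \ref{weilrepskewh} and \ref{weilrepher}. Most of the data axioms are routine. (D1) and (D2) are immediate. Axioms (\ref{gamma1}) and (\ref{gamma2}) follow at once from how the quadratic form $a\mapsto a^*Sa$ behaves under the involution and under scalar substitution. For (\ref{chi1}) we need $\alpha(t^*t)=1$, which holds because $\mu(t^*t)=\mu(t^*)\mu(t)=\mu(t)^2=1$ by Theorem \ref{mu55}. For (\ref{chi2}), applying (A6) to $c=2y^*x$ gives $\beta(2y^*x+2\va x^*y)=1$, which is exactly $\chi(y,x)=\chi(x,y)^{-\va}$. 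For (\ref{chi3}), if $\beta(2a^*y)=1$ for every $a\in B^m$, then the left ideal $2(By_1+\cdots+By_m)$ lies in $\ker\beta$; since primitivity is left-right symmetric for Frobenius rings (\cite{Ho}) and $2\in B^\times$, this forces $y=0$. For (\ref{gamma3}), expanding $\gamma(t,x+z)$ with $t^*=-\va t$ produces the cross term $\beta(-\va(x^*tz+z^*tx))$, and (A6) applied to $c=x^*tz$ gives $\beta(x^*tz)=\beta(z^*tx)$, whereby the cross term collapses to $\beta(-2\va z^*tx)=\chi(tz,x)$. For (\ref{cez}): when $\va=-1$, Theorem \ref{mu1} with $T=1_m$ gives $\widehat{G}(\beta)^{2m}=(-1)^{(|N|-1)/2}|N|$, so $f^2|P|=(-1)^{(|N|-1)/2}=\mu(-1_m)=\alpha(\va)$; when $\va=1$, direct computation gives $f^2|P|=|B|^{m-2n}=1=\mu(1_m)=\alpha(\va)$.

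The main obstacle is (\ref{c}), which I would handle by completing the square. For $t\in A^{\va\text{-sym}}\cap A^\times$ we have $t^*=-\va t$ and $(t^{-1})^*=-\va t^{-1}$. Substituting $y=y'-tx$ and expanding yields
\begin{equation*}
2x^*y-\va y^*t^{-1}y=-\va y'^*t^{-1}y'+(x^*y'+\va y'^*x)-x^*tx,
\end{equation*}
where the middle parenthesis vanishes under $\beta$ by (A6) applied to $c=x^*y'$. Hence
\begin{equation*}
\sum_{y\in P}\chi(x,y)\gamma(t^{-1},y)=\beta(-x^*tx)\sum_{y'\in P}\beta(-\va y'^*t^{-1}y').
\end{equation*}
Multiplying by $\gamma(-\va t,x)=\beta(x^*tx)$ cancels the prefactor, leaving a Gauss sum I evaluate via the earlier machinery. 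In the skew hermitian case $(t^{-1})^*=t^{-1}$, so Theorem \ref{mu1} gives $\sum_{y'}\beta(y'^*t^{-1}y')=\mu(t)\widehat{G}(\beta)^m$, and multiplication by $f$ yields $(-1)^{(|N|-1)/2}\mu(t)=\mu(-1_m)\mu(t)=\mu(-t)=\alpha(-t)$. In the hermitian case $-t^{-1}\in A^{+\text{-sym}}$, so Theorem \ref{mu2} gives $\sum_{y'}\beta(-y'^*t^{-1}y')=\mu(-t^{-1})|B|^n$, and multiplication by $f=1/|B|^n$ yields $\mu(-t^{-1})=\mu(t)=\mu(-t)$, the last equality because $\mu(-1_m)=1$ when $m$ is even and $|B|$ is odd.

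With all data axioms in hand, direct substitution of $\chi$, $\gamma$, $\alpha$, and $f$ into (\ref{luis1})--(\ref{luis3}) reproduces (\ref{fer1})--(\ref{fer3}) verbatim when $\va=-1$ and (\ref{ferna1})--(\ref{ferna3}) verbatim when $\va=1$. Hence the Heisenberg-theoretic Weil representation of Theorems \ref{weilrepskewh} and \ref{weilrepher}, restricted to $\SSL_*^\va(2,A)$, is a homomorphism whose values on the Bruhat generators are precisely (\ref{luis1})--(\ref{luis3}), giving existence. Uniqueness is immediate because $\SSL_*^\va(2,A)$ is by definition generated by its Bruhat elements. Finally, for the local refinement, Theorem \ref{jopa4} gives $\SL_*^\va(2,A)=\SSL_*^\va(2,A)$ except when $\va=1$, $*$ is ramified, and $m$ is even, where the index is $2$; this justifies the closing sentence.
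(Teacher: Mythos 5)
Your proposal is correct and follows the same overall architecture as the paper's proof: verify the data axioms using the Gauss sum identities of Theorems \ref{mu1}, \ref{mu2} and the invariance $\mu(T)=\mu(T^*)$ of Theorem \ref{mu55}, match the resulting formulas (\ref{luis1})--(\ref{luis3}) against (\ref{fer1})--(\ref{fer3}) and (\ref{ferna1})--(\ref{ferna3}), deduce uniqueness from the fact that the Bruhat elements generate $\SSL_*^{\va}(2,A)$, and invoke Theorem \ref{jopa4} for the local refinement. The one genuine point of divergence is your treatment of the hardest axiom (\ref{c}): the paper disposes of it by citing Lemma \ref{formi}, an abstract consequence of (\ref{chi2}), (\ref{gamma2}) and (\ref{gamma3}) proved earlier via the substitution $y\mapsto y+tx$, whereas you re-derive the same reduction concretely by completing the square $y=y'-tx$ in the exponent and killing the cross term with (A6). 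The two computations are of course the same change of variable in different clothing; the paper's version has the advantage of being reusable (it also feeds into Theorem \ref{data} and Corollary \ref{coro}), while yours is self-contained and makes the cancellation of $\gamma(-\va t,x)$ against $\beta(-x^*tx)$ completely explicit. Your justification of (\ref{chi3}) via the left-right symmetry of primitivity for Frobenius rings is also a slightly different, but equally valid, route to what the paper attributes to the nondegeneracy of $h$. All the individual verifications check out, including the parity argument $\mu(-1_m)=(-1)^{(|N|-1)/2}$ and its vanishing when $m$ is even.
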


\begin{proof} It is clear that $\chi$ is bi-additive and we already established the fact that $\alpha$ is a group homomorphism. Moreover, by definition, we have
$$
\chi(tx, y)=\beta(2x^* t^*y)=\chi(x, t^\ast y)\quad x,y\in P, t \in A.
$$
But $\alpha(t)^2=1$ and, by (\ref{vamu}), $\alpha(t)=\alpha(t^*)$, for all $t\in A^\times$, so (\ref{chi1}) holds. The definition of $\chi$
and (A6) immediately yield (\ref{chi2}). Since $2\in B^\times$, $\beta$ is primitive and $h$ is nondegenerate, we see that (\ref{chi3}) is satisfied. A trivial calculation shows that (\ref{gamma1}) and (\ref{gamma2}) hold. Now, given any
$x,z\in P$ and $t\in A^{\varepsilon\text{-sym}}\cap A^\times$, we have
$$
\gamma(t,x+z)=\beta((-\va)(x+z)^*t(x+z))=\gamma(t,x)\gamma(t,z)\beta((-\va)(x^* t z+z^*tx)).
$$
But, thanks to $t^*=-\va t$ and (A6), we have
$$
\beta(x^* t z+z^*tx)=\beta(x^* t z-\va z^*t^*x)=\beta(x^* t z-\va (x^*tz)^*)= \beta(2x^* tz)=\chi(x,tz).
$$
Applying (\ref{chi2}) we obtain (\ref{gamma3}).

In the hermitian case, the very definition of $f$ shows that $f^2|P|=1=\alpha(1)$. In the skew hermitian case, (\ref{mu9}) gives
$$
f^2|P|=\alpha(-1).
$$
This establishes (\ref{cez}).

Finally, let $x\in P$ and $t\in A^{\varepsilon\text{-sym}}\cap A^\times$. By Lemma \ref{formi}, we have
$$
f\gamma(-\varepsilon t,x)\underset{y\in P}{\sum}\chi( x,y)\gamma(t^{-1},y)=f\underset{y\in P}\sum\gamma(t^*,y).
$$
If $\va=-1$ then (\ref{mu82}) and our definition of $f$ yield
$$
f\underset{y\in P}\sum\gamma(t^*,y)=\alpha(-1)\alpha(t^*)=\alpha(-t),
$$
while if $\va=1$ then (\ref{mu89}) and our definition of $f$ give
$$
f\underset{y\in P}\sum\gamma(t^*,y)=\alpha(t^*)=\alpha(-t).
$$
This proves (\ref{c}).

Comparing (\ref{fer1})-(\ref{fer3}) and (\ref{ferna1})-(\ref{ferna3}) with (\ref{luis1})-(\ref{luis3})
we see that $W$ and $R$ agree on the Bruhat elements. But $W$ {\em is} a representation and the Bruhat elements
generate $\SSL_*^{\va}(2, A)$, so $R$ extends
in exactly one way to a homomorphism $\SSL_*^{\va}(2, A)\to\GL(X)$, namely the one defined by (\ref{fer1})-(\ref{fer3}) and (\ref{ferna1})-(\ref{ferna3}).

The last assertion of the theorem is established in Theorems \ref{jopa4}.
\end{proof}



\begin{note} The first part of Theorem \ref{main} shows that our choice of $(P,\chi,\gamma, \alpha,f)$
satisfies all data axioms. Therefore, Theorem \ref{data} gives an independent verification that $W$ does preserve all relations (R1)-(R6).
\end{note}


\section{Examples}\label{exam}

\begin{example} Let $\O$ be discrete valuation ring with involution having a finite residue field of characteristic not 2 and let~$B$ be a quotient of $\O$ by a nonzero power of its maximal ideal. Then $B$ inherits an involution, say $*$, from $\O$
and we let $R$ stand for the fixed ring of $*$. Three cases arise (see \cite[Proposition 5]{CQS}):

$\bullet$ symplectic: $*$ is trivial, that is, $B=R$.

$\bullet$ unramified: $B=R\oplus \theta R$, where $\theta$ is a unit of $B$ and $\theta^*=-\theta$.

$\bullet$ ramified: $B=R\oplus \pi R$, where $B\pi$ is the maximal ideal of $B$ and $\pi^*=-\pi$.


The ramified case further divides into two cases, odd or even, depending on whether the nilpotency degree of $\pi$ is odd, $2\ell-1$, or even,~$2\ell$.

In all cases, $B$ and $R$ are finite, commutative, principal, local rings of
odd characteristic. Let $\r=B\pi$ and $\m=Rp$ stand for their
maximal ideals, so that $\m=R\cap\r$, and let $F_q=R/\m$ be the
residue field of $R$. Then $B/\r\cong F_q$ in the symplectic and
ramified cases, and $B/\r\cong F_{q^2}$ in the unramified case. We
choose $\pi$ and $p$ so that $\pi=p$ in the symplectic and
unramified cases, and $\pi^2=p$ in the ramified case. 

We have $B=R\oplus B_s$, where $B_s$ is the additive group of all skew hermitian elements of~$B$. In the unramified case, $B_s=R\theta$
and $\{1,\theta\}$ is an $R$-basis of $B$. In the ramified case, $B_s=R\pi$, but $\{1,\pi\}$ is an $R$-basis of $B$ in the even case only.
In the ramified odd case the annihilator of $\pi$ in $R$ is $Rp^{\ell-1}$. This is true even in the extreme case when $\ell=1$,
which is the symplectic field case $A=R=F_q$.

Let $d:B\to R$ be the projection of $B$ onto $R$ in the symplectic, unramified, and ramified odd cases, and $d(r+s\pi)=s$ in the ramified even case.

We take $h$ be skew hermitian ($\va=-1$) in the symplectic, unramified, and ramified odd cases,
whereas $h$ is hermitian ($\va=1$) in the ramified even case.

It is easy to see that $R$ admits a primitive group homomorphism $\lambda:R^+\to\C^\times$, in which case so
is $\beta=\lambda\circ d:B^+\to \C^\times$.

If we set $f=d\circ h:V\times V\to R$ we obtain an embedding of the unitary group associated to $h$
into the symplectic group associated to the nondegenerate alternating form $f$.

See \cite{GV} for a comparison between G\'erardin's method \cite{G} and the abstract data construction of the Weil representation
of $\U_{2n}(F_{q^2})$, a special instance of the unramified case.
\end{example}

\begin{example} Let $F_q$ be a finite field of odd characteristic and let $F_{q^2}$ be its quadratic extension.
We have an involution $*$ of $F_{q^2}$ with fixed field $F_q$, given by $a^*=a^q$. Let $C$ be the skew polynomial
ring over $F_{q^2}$, where $ta=a^* t$ for all $a\in F_{q^2}$. It is well known and easy to see that $C$ is a left and right principal
ideal domain. There is a unique extension of $*$ to an involution of $C$ such that $t^*=-t$,
given by $(a_0+a_1 t+a_2t^2+a_3t^3+\cdots)^*=a_0^*-a_1t+a_2^*t^2-a_3t^3+\cdots$. For $s\geq 2$, set $B=C/(t^s)$. The local ring $B$
inherits an involution, also denoted by $*$, from $C$. Note that $B$ has a unique minimal right (and left) ideal, namely $(t^{s-1})$
(we abuse notation here). Let $\beta:B\to\C^\times$ be the group homomorphism defined by
$$
\beta(a_0+a_t+\cdots+a_{s-1} t^{s-1})=\lambda(a_{s-1}+a_{s-1}^*),\quad a_i\in F_{q^2},
$$
where $\lambda:F_q^+\to\C^\times$ is a primitive (=nontrivial) group homomorphism. Set $\va=-1$ if $s$ is odd and $\va=1$ if $s$ is even.
Then all our axioms (A1)-(A6) are satisfied. This gives an example of a noncommutative local ring $B$ satisfying all our axioms, and both
the hermitian and skew hermitian cases occur.
\end{example}


\begin{example}\label{notlocal} Let \( F_q \) be a finite field of odd characteristic and consider the non local ring \( B = M(2,F_q) \). Set \( m=1 \)
so that \( A = B \). For
\( x \in B \) let \( x^* \) be the adjugate of \( x \), i.e.
\[ \begin{pmatrix} x_{11} & x_{12} \\ x_{21} & x_{22} \end{pmatrix}^*
= \begin{pmatrix} x_{22} & -x_{12} \\ -x_{21} & x_{11} \end{pmatrix}.\]
Equivalently, \( * \) is the adjoint with respect to the standard
symplectic form on \( F_q^2 \).
Let \( \beta=\lambda\circ\mathrm{tr} \), where $\lambda:F_q^+\to\C^*$ is a primitive (=nontrivial) group
homomorphism and $\mathrm{tr}:M(2,F_q)\to F_q$ is the trace map. Let \( \varepsilon = -1 \).
All our axioms (A1)-(A6) are satisfied with these choices. Now let
$$ a = \begin{pmatrix} 0 & 0 \\ 0 & 1 \end{pmatrix},\;
 b = \begin{pmatrix} 0 & 1 \\ 0 & 0 \end{pmatrix},\;
c = \begin{pmatrix} 0 & 0 \\ 1 & 0 \end{pmatrix},\;
d = \begin{pmatrix} 1 & 0 \\ 0 & 0 \end{pmatrix}.
$$
Using (\ref{eqelements}), we see that \( X = \begin{pmatrix}
    a & b \\ c & d
\end{pmatrix} \in  \SL_*^{-}(2,A)\). However  $X \not\in \SSL_*^{-}(2,A)$, since all elements of \( \SSL_*^{-}(2,A) \) have determinant
\( 1 \)
(when considered as elements of \( M(4,F_q) \)), whereas \( X \) has
determinant \( -1 \).

It is also easy to see that the coprime
lemma \cite[Proposition 3.3]{PS}
does not hold for \( a,c \) as above, so indeed $\SL_*^{-}(2,A)$
cannot generated by the Bruhat elements due to \cite[Lemma 3.5]{PS}. As a matter of fact, in this example
\( A^s = \{ \lambda I_2 : \lambda \in F_q\} \) and it is clear that
\( a + rc \not\in A^\times \) for any \( r \in A^s \).
\end{example}



\begin{thebibliography}{RBMW}



\bibitem[AP]{AP}
A. Aubert and  T. Przebinda, \emph{A reverse engineering approach to the Weil representation},
Cent. Eur. J. Math. 12 (2014) 1500--1585.


\bibitem[BRW]{BRW} B. Bolt, T. G. Room, G. E. Wall,
\emph{On the Clifford collineation, transform and similarity groups I},
J. Austral. Math. Soc. 2 (1961) 60-–79.

\bibitem[CMS]{CMS} G. Cliff, D. McNeilly and F. Szechtman, \emph{Weil representations of symplectic groups over rings},
J. London Math. Soc. (2000) (2) 62 (2000) 423--436.

\bibitem[CMS2]{CMS2} G. Cliff, D. McNeilly and F. Szechtman, \emph{Clifford and Mackey theory for Weil representations of symplectic groups},
J. Algebra (2003) 262 (2000) 348--379.

\bibitem[CQS]{CQS} J. Cruickshank, R. Quinlan, and F. Szechtman, \emph{Hermitian and skew hermitian forms
 over local rings}, Linear Algebra Appl. 551 (2018) 147--161.


\bibitem[CS]{CS} J. Cruickshank and F. Szechtman, \emph{Generators and relations for the unirary group of a skew hermitian
form over a local ring}, Linear Algebra Appl. 552 (2018) 1--28.


\bibitem[G]{G} R. Gow, \emph{Even unimodular lattices associated with the Weil representation of the finite symplectic group},
J. Algebra 122 (1989)  510--519.


\bibitem[Ge]{Ge} P. G$\mathrm{\acute{e}}$rardin, \emph{Weil representations associated to finite fields}, J. Algebra 46 (1977) 54–-101.



\bibitem[GPS]{GPS}
L. Guti\'errez,   J. Pantoja and  J.  Soto-Andrade, \emph{On generalized Weil representations over involutive rings},
 Contemp. Math. 544 (2011) 109--122.

\bibitem[GMT]{GMT} R. M. Guralnick, K. Magaard, P. H. Tiep, \emph{Symmetric and alternating powers of Weil representations of finite symplectic groups},
Bull. Inst. Math. Acad. Sin. (N.S.) 13 (2018) 443–-461.

\bibitem[GV]{GV}
L. Guti\'errez Frez and  A. Vera-Gajardo,  Weil representations of $\rm{U}(n,n)(\mathbb{F}_{q^2}/\mathbb{F}_{q^2})$, $q>3$, via presentation and compatibility of methods.
\emph{Comm. Algebra}, 46 (2) (2018) 653-663.


\bibitem[How]{H} R. E. Howe,  \emph{On the characters of Weil's representations}, Trans. Amer. Math. Soc. 177 (1973) 287-298.



\bibitem[Ho]{Ho} T. Honold, \emph{Characterization of finite Frobenius
rings}, Arch. Math. (Basel)  76  (2001) 406-–415.





\bibitem[La]{La} E. Landau, \emph{Elementary number theory}, Chelsea, New York, 1958.


\bibitem[PS]{PS} J. Pantoja and J. Soto-Andrade,
\emph{A Bruhat decomposition of the group $SL_*(2,A)$},
J. Algebra 262 (2003) 401-–412.

\bibitem[PS2]{PS2} J. Pantoja and J. Soto-Andrade,
\emph{Bruhat Presentations for *-Classical Groups},
Comm. Algebra 37 (2009) 4170-–4191.


\bibitem[Ri]{Ri} P. Ribenboim, \emph{Algebraic Numbers}, Wiley-Interscience, New York, 1972.


\bibitem[S]{S} J. Soto-Andrade, 	\emph{ Repr\'{e}sentations de certain groupes
symplectiques finis},  Bull. Soc. Math. France Mem.  55-56  (1978) 5--334.

\bibitem[T]{T} T. Thomas, \emph{The character of the Weil representation}, J. Lond. Math. Soc. (2) 77 (2008) 221–-239.


\bibitem[TZ]{TZ} P. H. Tiep and A. Zalesski, \emph{Some characterizations of the Weil representations of symplectic and unitary groups}, J. Algebra 192 (1997) 130-–165.


\bibitem[W]{W}
A. Weil, \emph{Sur certains groupes d'op\'erateurs unitaires},
 Acta Math. 111 (1964) 143--211.

\bibitem[Wa]{Wa} H. N. Ward,
\emph{Representations of symplectic groups},
J. Algebra 20 (1972) 182-–195.

\end{thebibliography}
\end{document}